\newtheorem{theorem}{Theorem}[section]
\newtheorem{lemma}[theorem]{Lemma}
\newtheorem{proposition}[theorem]{Proposition}
\newtheorem{corollary}[theorem]{Corollary}
\theoremstyle{definition}
\newtheorem{definition}[theorem]{Definition}
\newtheorem{example}[theorem]{Example}
\theoremstyle{remark}
\newtheorem{remark}[theorem]{Remark}
\numberwithin{equation}{section}
\title[A quantum shuffle approach to quantum determinants]
{A quantum shuffle approach to quantum determinants}
\author[Run-Qiang Jian]{Run-Qiang Jian}
\address{School of Computer Science, Dongguan University
of Technology, 1, Daxue Road, Songshan Lake, 523808, Dongguan, P.
R. China}
\email{jianrq@dgut.edu.cn}
\dedicatory{To Professor Marc Rosso on the occasion of his 60th birthday}
\keywords{Quantum shuffle product, quantum exterior algebra, FRT algebra, convolution product, quantum determinant}
\subjclass[2020]{17B37, 20G42}
\begin{document}

\begin{abstract}
Let $\bigwedge_\sigma V=\bigoplus_{k\geq 0}\bigwedge_\sigma^kV$ be the quantum exterior algebra associated to a finite-dimensional braided vector space $(V,\sigma)$. For an associative algebra $\mathfrak{A}$, we consider the convolution product on the graded space $\bigoplus_{k\geq 0}\mathrm{Hom}_{\mathbb{C}}\big(\bigwedge_\sigma^kV,\bigwedge_\sigma^kV\otimes \mathfrak{A}\big)$.  Using this product, we define a notion of quantum minor determinant of a map from $V$ to $V\otimes \mathfrak{A}$, which coincides with the classical one in the case that $\mathfrak{A}$ is the FRT algebra corresponding to $U_q(\mathfrak{sl}_N)$. We establish a quantum Laplace expansion formula and multiplicative formula for these determinants.
\end{abstract}

\maketitle
\setcounter{tocdepth}{1}

\section{Introduction}
Influenced by the work of Faddeev and his St. Petersburg school, Drinfeld \cite{D} and Jimbo \cite{Jim1} introduced quantum groups independently in the mid-1980s. After their birth, quantum groups have been developed greatly and become one of the most important subjects in mathematics and mathematical physics over the past 35 years. Quantum groups include two important classes of examples. One consists of quantized enveloping algebras while the other consists of quantum matrix algebras. Quantum matrix algebras and their quotients were constructed by Faddeev, Reshetikhin, and Takhtajan \cite{FRT}, and Manin \cite{Ma}. They are dual to quantized enveloping algebras and can be viewed as some sort of quantization of the coordinate rings of linear algebraic groups. Quantum determinants, or more generally quantum minor determinants, play an essential role in the investigation of quantum matrix algebras. They were defined firstly for Jimbo's R-matrix of $U_q\mathfrak{gl}_N$ \cite{RTF, Ma}, and soon exteded to even Hecke symmetries \cite{Gu}. Recently, they are constructed for general FRT algebras\cite{FG}, and for quantum matrix algebras related to couples of compatible braidings \cite{GS}. Many significant results and applications of quantum determinants have been found during the last three decades, such as constructions of quantum linear groups \cite{RTF,PW}, the Cayley-Hamilton theorem \cite{Zh,IOPS}, zonal spherical functions \cite{NYM}, the quantum Capelli identity \cite{NUW}, and quantum Phaffians \cite{JZ}.

In the case of type A, one usually uses quantum exterior algebras to derive results about quantum determinants. Many results mentioned above use such a technique. In the present paper, we attempt to extend the machinery in type A to general cases. Our starting point is quantum exterior algebras related to general braidings. General quantum exterior algebras were constructed by Woronowicz in his work on covariant differential calculus on quantum groups \cite{Wo}. Rosso \cite{Ro} generalized them in the framework of quantum shuffle algebras for arbitrary braidings. These algebras are also called Nichols algebras in the category of Yetter-Drinfeld modules. Besides their own interest (see, e.g., \cite{FCG, IO}), quantum exterior algebras have many important applications (see, e.g.,\cite{Ro, He, AS1,AS2}). We will use the bialgebra structure of quantum exterior algebras to define quantum determinants and study their properties.

Let us give a brief description of our construction. In linear algebra, one way to define minor determinants is to use exterior products of endomorphisms. Given a vector space $V$, the exterior product of two maps $f\in \mathrm{End}(\bigwedge^m V)$ and $g\in \mathrm{End}(\bigwedge^n V)$ is given by \begin{align*}\lefteqn{(f \wedge g)(u_1\wedge\cdots \wedge u_{m+n})}\\
&=\sum_{w}\mathrm{sign}(w)f(u_{w(1)}\wedge\cdots\wedge u_{w(m)})\wedge g(u_{w(m+1)}\wedge\cdots\wedge u_{w(m+n)}),\end{align*} where the sum runs through all $(m,n)$-shuffles, and $u_1,\ldots,u_{m+n}\in V$. The above formula involves two structures: the exterior product and the shuffle action. The later one can be viewed as a coproduct on the exterior algebra built on $V$. In order to extend minor determinants to quantum case, we replace the usual exterior algebras by certain quantum exterior algebras and consider some sort of quantum exterior product of maps. By combining all these structures, we define quantum minor determinants of an endomorphism which take values in an arbitrary algebra. Some properties of usual determinants and classical quantum determinants are extended to these new quantum minor determinants.

Finally, we indicate some advantages of our approach. First of all, our interpretation of quantum determinants matches the original one of linear endomorphisms. Secondly, this approach exhibits that comodule structures, product, and coproduct of quantum exterior algebras are essential ingredients of quantum determinants. The last one is that our framework is more flexible for applications. We work on general bialgebras, but not just FRT algebras.

This paper is organized as follows. In Section 2, some notation and terminologies are fixed. In Section 3, we recall the construction of quantum exterior algebra associated to a braided space. In Section 4, we show that quantum exterior algebras possess comodule structures over Faddeev-Reshetikhin-Takhtajan bialgebras. As an application, we show that the positive part of a quantum group admits such a comodule structure. In Section 5 and 6, we consider convolution product on the space of graded homomorphisms from a quantum exterior algebra to the tensor product of this algebra and an algebra. Quantum minor determinants are introduced by using this product. A quantum Laplace expansion formula and multiplicative formula are established for these determinants.

\section{Some notation}

We start by fixing some notation which will be used frequently in the sequel.

In this paper, the ground field is always $\mathbb{C}$, the field of complex numbers. All objects
we discuss are defined over $\mathbb{C}$. We always use small letters $i,j,k,l,m,n,p$ to denote nonnegative integers.

For $n\geq 1$, we denote by $\mathfrak{S}_{n}$ the symmetric group of the
set $\{1,2,\ldots,n\}$, and by $s_{i}$, $1\leq i\leq n-1$, the
transposition permuting $i$ and $i+1$. The length of a permutation $w\in \mathfrak{S}_{n}$ is denoted by $l(w)$, i.e., $l(w)=\sharp \{(i,j)|1\leq i<j\leq n, w(i)>w(j)\}$.  For any positive integers $i_1,i_2,\ldots,i_k$, an $(i_1,i_2,\ldots,i_k)$-shuffle is a permutation $w\in\mathfrak{S}_{i_1+\cdots+i_k}$ satisfying \begin{align*}
w(1)<w(2)&<\cdots<w(i_1),\\[3pt]
w(i_1+1)<w(i_1+2)&<\cdots<w(i_1+i_2),\\[3pt]
&\ \ \ \ldots, \\[3pt]
w(i_1+\cdots+i_{k-1}+1)<w(i_1+\cdots+i_{k-1}+2)&<\cdots<w(i_1+\cdots+i_{k}).
\end{align*}We denote by $\mathfrak{S}_{i_1,i_2,\ldots,i_k}$ the set of all $(i_1,i_2,\ldots,i_k)$-shuffles. As usual, the notation $\mathfrak{S}_{i_1}\times \mathfrak{S}_{i_2}\times \cdots\times \mathfrak{S}_{i_k}$ stands for the Young subgroup of $\mathfrak{S}_{i_1+\cdots+i_k}$ corresponding to the composition $(i_1,i_2,\ldots,i_k)$. We also denote by $1_{\mathfrak{S}_m}\times \mathfrak{S}_n$ the subgroup of $\mathfrak{S}_{m+n}$ whose elements fix $1,2,\ldots,m$. The definition of $\mathfrak{S}_m\times 1_{\mathfrak{S}_n}$ is similar. Some results below depend on special decompositions of permutations. Let $S_1,S_2,S_3$ be subsets of $\mathfrak{S}_n$. The ordered pair $(S_1, S_2)$ is called a \emph{reduced decomposition} of  $S_3$ if the multiplication map \[\begin{array}{cccc}
 & S_1\times S_2&\rightarrow&S_3 ,\\[3pt]
&(w_1,w_2)&\mapsto&w_1w_2,
\end{array}\]is bijective and moreover $l(w_1w_2)=l(w_1)+l(w_2)$ for each $w_1\in S_1$ and $w_2\in S_2$.

Let $\nu\in \mathbb{C}$ be a nonzero number which is not a root of unity. For any nonnegative integer $n$, we denote $(n)_\nu=\frac{1-\nu^n}{1-\nu}$, $(n)_\nu!=(1)_\nu(2)_\nu\cdots(n)_\nu$ for $n\geq 1$, and $(0)_\nu!=1$. We have the following formulas \begin{equation}\label{Quantum factorial}\sum_{w\in \mathfrak{S}_n}\nu^{l(w)}=(n)_\nu!\end{equation} and \begin{equation}\label{Quantum binomial}\sum_{w\in \mathfrak{S}_{m,n}}\nu^{l(w)}=\frac{(m+n)_\nu!}{(m)_\nu!(n)_\nu!}.\end{equation}The first one can be proved by an easy induction. For a proof of the second one, one can consult Theorem 6.1 in \cite{KC}.

In this paper, the composition of two linear maps $F$ and $G$ will be written as $FG$ if it is definable. For any two vector spaces $V$ and $W$, the flip map $\tau_{V,W}: V\otimes W\rightarrow W\otimes V$ is defined by $\tau_{V,W}(v\otimes w)=w\otimes v$. We often abbreviate $\tau_{V,W}$ to $\tau$ if $V$ and $W$ are clear in the context.

Let $\mathfrak{A}$ be an associative algebra with multiplication map $\mathfrak{m}$. Let $V$ be a vector space, and $T(V)=\mathbb{C}\oplus V\oplus V^{\otimes 2}\oplus \cdots$ the tensor algebra over $V$ whose product is given by concatenation. We endow $T(V)\otimes \mathfrak{A}$ with the following product $\diamond$: for any $x\in V^{\otimes m}$, $y\in V^{\otimes n}$, and $a,b\in \mathfrak{A}$, $$(x\otimes a)\diamond(y\otimes b)=(x\otimes y)\otimes (ab).$$ For a linear map $F:V\rightarrow V\otimes  \mathfrak{A}$ and any $k\geq 1$, we define the \emph{$k$-fold cross product} $F^{\times k}: V^{\otimes k}\rightarrow V^{\otimes k}\otimes \mathfrak{A}$ of $F$ recursively by $F^{\times 1}=F$, and \begin{equation}\label{k-fold of cross product}F^{\times (k+1)}=(\mathrm{id}_{V^{\otimes (k+1)}}\otimes \mathfrak{m})(\mathrm{id}_V\otimes \tau\otimes \mathrm{id}_{\mathfrak{A}})(F\otimes F^{\times k}),\end{equation} for $k\geq 1$. Obviously, we have \begin{equation}\label{Multiplicative property}F^{\times (m+n)}(x\otimes y)=\big(F^{\times m}(x)\big)\diamond\big(F^{\times n}(y)\big).\end{equation}Additionally, if $\mathfrak{A}$ is a bialgebra and $F$ is a right $\mathfrak{A}$-comodule structure map on $V$, then $F^{\times k}$ is just the usual tensor comodule structure map on $V^{\otimes k}$.

If $\mathfrak{A}$ is a bialgebra and $\varrho:V\rightarrow V\otimes  \mathfrak{A}$ is a comodule structure map on $V$, we adopt Sweedler's notation for comodules: for any $v\in V$, $$\varrho(v)=\sum_{(v)} v_{(0)}\otimes v_{(1)}=\sum v_{(0)}\otimes v_{(1)}.$$

\section{Quantum exterior algebras}
In this section, we recall the construction of quantum exterior algebras and some of their basic properties. For more details, we refer the reader to \cite{Ro,FCG,IO}.

We always assume that $V$ is an $N$-dimensional vector space with a fixed basis $\{v_1,\ldots,v_N\}$ throughout this paper. Denote by $I_k$ the identity map of $V^{\otimes k}$ for any nonnegative integer $k$. We sometimes write $I$ in place of $I_1$ for simplicity. A braiding $\sigma$ on $V$ is an invertible linear
map in $\mathrm{End}(V\otimes V)$ satisfying the braid relation: $$(\sigma\otimes
I)(I\otimes \sigma)(\sigma\otimes
I)=(I\otimes \sigma)(\sigma\otimes
I)(I\otimes \sigma).$$ We call the pair $(V,\sigma)$ a braided space. A braiding $\sigma$ is said to be of Hecke type if it satisfies the following Hecke relation $$(\sigma-qI_2)(\sigma+q^{-1}I_2)=0,$$or equivalently $$\sigma^2=(q-q^{-1})\sigma+I_2.$$ The most well-known braiding of Hecke type is undoubtedly Jimbo's braiding of type $A_{N-1}$ (\cite{Jim2}). It is defined by  \begin{equation}\label{Jimbo braiding}\sigma(v_i\otimes v_j)=\begin{cases}v_j\otimes v_i,&\text{if }i<j\\[3pt]qv_i\otimes v_i,&\text{if }i=j,\\[3pt]v_j\otimes v_i+(q-q^{-1})v_i\otimes v_j,&\text{if }i>j.\end{cases}\end{equation}For more constructions of braidings of Hecke type, one can consult \cite{Gu}.

For any positive integers $n\geq 2$ and $1\leq i\leq
n-1$, we denote by $\sigma_i$ the operator $I_{
i-1}\otimes \sigma\otimes I_{ n-i-1}\in
\mathrm{End}(V^{\otimes n})$. We must mention here that we only adopt this convention for braidings in our paper. For any other operators, subscripts have no such a meaning. For a reduced expression $w=s_{i_1}\cdots s_{i_l}$ of $w\in \mathfrak{S}_n$, we
define an operator $\sigma_w\in
\mathrm{End}(V^{\otimes n})$ by $$\sigma_w=\sigma_{i_1}\cdots \sigma_{i_l}.$$ By a well-known result of Matsumoto (see, e.g., Theorem 1.2.2 in \cite{GP}), $\sigma_w$ is independent of the choice of reduced expressions of $w$. In particular, $\sigma_i=\sigma_{s_i}$.

Let $q\in \mathbb{C}$ be a nonzero number such that $(n)_{q^{-2}}\neq 0$ for any $n\geq 1$ and $q^2\neq -1$. Given $k$ positive integers $i_1,\ldots,i_k$, we define two operators $$\shuffle_{i_1,\ldots,i_k}^\pm=\frac{(i_1)_{q^{-2}}!\cdots (i_k)_{q^{-2}}!}{(i_1+\cdots+i_k)_{q^{-2}}!}\sum_{w\in \mathfrak{S}_{i_1,\ldots,i_k}}(-q)^{-l(w)}\sigma_{w^{\pm 1}}.$$ The quantum shuffle product $\shuffle_\sigma$ on the tensor space $T(V)$ is defined as follows: for any $x\in V^{\otimes m}$ and $y\in V^{\otimes n}$, $$x\shuffle_\sigma y=\shuffle_{m,n}^+(x\otimes y).$$Then $(T(V),\shuffle_\sigma)$ is a unital associative algebra (see Proposition 9 in \cite{Ro}). Indeed, the associativity follows immediately from the fact that the pairs $(\mathfrak{S}_{i+j,k}, \mathfrak{S}_{i,j}\times 1_{\mathfrak{S}_{k}})$ and $(\mathfrak{S}_{i,j+k}, 1_{\mathfrak{S}_{i}}\times\mathfrak{S}_{j,k})$ are both reduced decompositions of $\mathfrak{S}_{i,j,k}$. This algebra is called the quantum shuffle algebra built on the braided space $(V,\sigma)$.

The graded subalgebra $\bigwedge_\sigma V$ of the quantum shuffle algebra $(T(V),\shuffle_\sigma)$ generated by $V$ is called the \emph{quantum exterior algebra} associated to the braided space $(V,\sigma)$. It is the quantum symmetric algebra associated to $-q^{-1}\sigma$ in the sense of Rosso \cite{Ro}. Let us provide a more explicit form of $\bigwedge_\sigma V$. The quantum anti-symmetrizers $A^{(k)}$ associated to $(V,\sigma)$ are defined as follows: $A^{(1)}=I\in \mathrm{End}(V)$ and $$A^{(k)}=\frac{1}{(k)_{q^{-2}}!}\sum_{w\in \mathfrak{S}_{k}}(-q)^{-l(w)}\sigma_w\in \mathrm{End}(V^{\otimes k})$$ for $k> 1$. For $m,n\geq 1$, we notice that the pair $(\mathfrak{S}_{m,n}, \mathfrak{S}_{m}\times \mathfrak{S}_{n})$ is a reduced decomposition of $\mathfrak{S}_{m+n}$. So we have $$\shuffle_{m,n}^+(A^{(m)}\otimes A^{(n)})=A^{(m+n)}.$$Hence for any $x\in V^{\otimes m}$ and $y\in V^{\otimes n}$, $$A^{(m)}(x)\shuffle_\sigma A^{(n)}(y)=A^{(m+n)}(x\otimes y).$$ As a consequence, $$\bigwedge\nolimits_\sigma V=\bigoplus_{k\geq 0} \bigwedge\nolimits_\sigma^k V,$$ where $\bigwedge_\sigma^k V=\mathrm{Im}A^{(k)}$ for $k\geq 1$ and $\bigwedge_\sigma^0 V=\mathbb{C}$.

Now let us look at braidings of Hecke type. In such a case the quantum anti-symmetrizers have many remarkable properties.

\begin{proposition}[Gurevich]If $\sigma$ is a braiding of Hecke type, then for any positive integers $k\geq 2$ and $1\leq i<k$, \begin{equation}\label{Gurevich1}A^{(k)}\sigma_i=\sigma_i A^{(k)}=-q^{-1}A^{(k)}.\end{equation} \end{proposition}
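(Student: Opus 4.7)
The plan is to exploit the Hecke relation $\sigma_i^2 = (q-q^{-1})\sigma_i + I_2$ together with a reduced decomposition of $\mathfrak{S}_k$ adapted to the subgroup $\langle s_i\rangle$. Concretely, I would set $W_i^{\mathrm{r}} = \{w \in \mathfrak{S}_k : l(ws_i) > l(w)\}$; this is the set of minimal-length right coset representatives, and the pair $(W_i^{\mathrm{r}}, \{e, s_i\})$ is a reduced decomposition of $\mathfrak{S}_k$ in the sense fixed in Section 2. Since $\sigma_w$ is compatible with reduced products (Matsumoto), this gives the clean factorization
$$\sum_{w\in \mathfrak{S}_k}(-q)^{-l(w)}\sigma_w \;=\; \Bigl(\sum_{w\in W_i^{\mathrm{r}}}(-q)^{-l(w)}\sigma_w\Bigr)\bigl(I_k + (-q)^{-1}\sigma_i\bigr),$$
so that up to the overall scalar $(k)_{q^{-2}}!^{-1}$, the operator $A^{(k)}$ carries the factor $I_k + (-q)^{-1}\sigma_i$ on its right.

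The heart of the argument is then the one-line identity
$$\bigl(I_k + (-q)^{-1}\sigma_i\bigr)\sigma_i \;=\; -q^{-1}\bigl(I_k + (-q)^{-1}\sigma_i\bigr),$$
which follows by expanding $\sigma_i - q^{-1}\sigma_i^2$ with the Hecke relation. Substituting into the factorization above yields $A^{(k)}\sigma_i = -q^{-1}A^{(k)}$.

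For the companion identity $\sigma_i A^{(k)} = -q^{-1}A^{(k)}$, I would run the symmetric argument: take $W_i^{\mathrm{l}} = \{w : l(s_iw) > l(w)\}$, use the reduced decomposition $(\{e, s_i\}, W_i^{\mathrm{l}})$ to factor the sum on the left as $(I_k + (-q)^{-1}\sigma_i)\sum_{w\in W_i^{\mathrm{l}}}(-q)^{-l(w)}\sigma_w$, and apply the twin Hecke identity $\sigma_i(I_k + (-q)^{-1}\sigma_i) = -q^{-1}(I_k + (-q)^{-1}\sigma_i)$, which is proved by the same expansion.

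There is no real obstacle here. The only step that requires bookkeeping is verifying the reduced decomposition claim so that the exponent $(-q)^{-l(w)}$ splits additively as $(-q)^{-l(w')}(-q)^{-\epsilon}$ with $\epsilon\in\{0,1\}$; once this is in place, the Hecke relation does all the work, and the two eigenvalues $q$ and $-q^{-1}$ of $\sigma_i$ immediately pick out $-q^{-1}$ as the one compatible with the sign $(-q)^{-l(w)}$ built into the antisymmetrizer.
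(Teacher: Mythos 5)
Your proof is correct. Note first that the paper itself does not prove this proposition: it simply refers the reader to Proposition~2.1 of Gurevich's paper \cite{Gu}, so there is no in-text argument to compare against. Your argument is a valid, self-contained replacement. The decomposition $(W_i^{\mathrm{r}},\{e,s_i\})$ is indeed a reduced decomposition of $\mathfrak{S}_k$ in the sense fixed in Section~2 (each coset $w\langle s_i\rangle$ contains exactly one element $w'$ with $l(w's_i)>l(w')$, and then $l(w's_i)=l(w')+1$), so Matsumoto's theorem gives the factorization $\sum_{w}(-q)^{-l(w)}\sigma_w=\bigl(\sum_{w'\in W_i^{\mathrm{r}}}(-q)^{-l(w')}\sigma_{w'}\bigr)\bigl(I_k-q^{-1}\sigma_i\bigr)$, and the identity $\bigl(I_k-q^{-1}\sigma_i\bigr)\sigma_i=-q^{-1}\bigl(I_k-q^{-1}\sigma_i\bigr)$ is an immediate consequence of $\sigma_i^2=(q-q^{-1})\sigma_i+I_k$; the left-handed version is symmetric. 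This is exactly the technique the paper itself uses elsewhere (e.g.\ to prove $\shuffle^{+}_{m,n}(A^{(m)}\otimes A^{(n)})=A^{(m+n)}$ via the reduced decomposition $(\mathfrak{S}_{m,n},\mathfrak{S}_m\times\mathfrak{S}_n)$), so your proof fits naturally into the paper's framework and could stand in place of the external citation. The closing remark about the eigenvalues $q$ and $-q^{-1}$ is only heuristic and is not needed; the factorization plus the one-line Hecke computation already completes the argument.
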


For a proof, one can see Proposition 2.1 in \cite{Gu}. As a consequence, we have

\begin{proposition}[Gurevich]If $\sigma$ is a braiding of Hecke type, then for any $k\geq 1$, \begin{equation}\label{Gurevich2}A^{(k)}A^{(k)}=A^{(k)}.\end{equation} \end{proposition}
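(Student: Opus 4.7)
The plan is a direct reduction to the preceding proposition together with formula (2.1). I would begin by expanding one copy of the anti-symmetrizer via its definition:
\[
A^{(k)}A^{(k)} = \frac{1}{(k)_{q^{-2}}!}\sum_{w\in\mathfrak{S}_k}(-q)^{-l(w)}\,\sigma_w A^{(k)}.
\]
So the whole task reduces to computing $\sigma_w A^{(k)}$ for each $w\in\mathfrak{S}_k$.

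For this, I would fix any reduced expression $w=s_{i_1}\cdots s_{i_l}$ with $l=l(w)$, so that $\sigma_w=\sigma_{i_1}\cdots\sigma_{i_l}$. Applying Proposition~3.1 $l$ times, starting from the rightmost factor $\sigma_{i_l}A^{(k)}=-q^{-1}A^{(k)}$ and then pulling the next $\sigma_{i_{l-1}}$ through the resulting $A^{(k)}$, and so on, yields
\[
\sigma_w A^{(k)} = (-q^{-1})^{l(w)}\,A^{(k)}.
\]

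Substituting this back and simplifying the scalar, $(-q)^{-l(w)}(-q^{-1})^{l(w)} = q^{-2l(w)}$, so
\[
A^{(k)}A^{(k)} = \frac{1}{(k)_{q^{-2}}!}\bigg(\sum_{w\in\mathfrak{S}_k} q^{-2l(w)}\bigg) A^{(k)}.
\]
Formula (2.1) with $\nu=q^{-2}$ gives $\sum_{w\in\mathfrak{S}_k}q^{-2l(w)}=(k)_{q^{-2}}!$, which cancels the prefactor and leaves $A^{(k)}$, as desired.

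I do not anticipate any genuine obstacle: the heavy lifting has already been done by Proposition~3.1, which identifies $A^{(k)}$ as a simultaneous eigenvector of right multiplication by each $\sigma_i$ with eigenvalue $-q^{-1}$, and Matsumoto's theorem guarantees that the iterated computation of $\sigma_w A^{(k)}$ does not depend on the choice of reduced expression. The only thing really to verify is the sign bookkeeping $(-q)^{-l(w)}(-q^{-1})^{l(w)}=q^{-2l(w)}$, which works out cleanly because the two $(-1)^{l(w)}$ factors combine to $+1$.
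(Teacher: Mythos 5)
Your proof is correct and follows essentially the same route as the paper: both reduce to the eigenvalue relation of Proposition~3.1 iterated along a reduced expression to get $\sigma_w A^{(k)}=(-q)^{-l(w)}A^{(k)}$ (the paper uses the mirror identity $A^{(k)}\sigma_w=(-q)^{-l(w)}A^{(k)}$, expanding the right factor instead of the left), and then both conclude via formula (2.1) with $\nu=q^{-2}$.
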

\begin{proof}By (\ref{Gurevich1}), we have immediately that $$A^{(k)}\sigma_w=(-q)^{-l(w)}A^{(k)},$$for any $w\in \mathfrak{S}_k$. Hence\begin{align*}
A^{(k)}A^{(k)}&=\frac{1}{(k)_{q^{-2}}!}\sum_{w\in \mathfrak{S}_{k}}(-q)^{-l(w)}A^{(k)}\sigma_w\\[3pt]
&=\frac{1}{(k)_{q^{-2}}!}\sum_{w\in \mathfrak{S}_{k}}(-q)^{-2l(w)}A^{(k)}\\[3pt]
&=A^{(k)},
\end{align*}where the last equality follows from (\ref{Quantum factorial}).\end{proof}

\begin{lemma}\label{Lemma for kernel}If $\sigma$ is a braiding of Hecke type, then $A^{(2)}(x)=0$ for any  $x\in \mathrm{Ker}(\sigma-qI_2)$.\end{lemma}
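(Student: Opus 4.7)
The plan is to simply unwind the definition of the quantum antisymmetrizer $A^{(2)}$ and apply it to an element of $\mathrm{Ker}(\sigma - qI_2)$. Since $\mathfrak{S}_2 = \{e, s_1\}$ with $l(e)=0$ and $l(s_1)=1$, and $\sigma_{s_1}=\sigma$, the definition gives
\[
A^{(2)} \;=\; \frac{1}{(2)_{q^{-2}}!}\bigl(I_2 - q^{-1}\sigma\bigr).
\]
For $x \in \mathrm{Ker}(\sigma - qI_2)$ we have $\sigma(x) = qx$, so $A^{(2)}(x) = \frac{1}{(2)_{q^{-2}}!}(x - q^{-1}\cdot qx) = 0$.

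There is essentially no obstacle here; the only thing to double-check is that $(2)_{q^{-2}}! = 1 + q^{-2}$ is nonzero, which is guaranteed by the standing hypothesis on $q$ that $(n)_{q^{-2}}\ne 0$ for all $n\geq 1$ (equivalently $q^2\ne -1$). The Hecke assumption on $\sigma$ is not actually needed for the statement as written; it is relevant only for the surrounding theory and for identifying $\mathrm{Ker}(\sigma-qI_2)$ as a meaningful eigenspace. Accordingly the proof will be a one-line direct computation rather than invoking the earlier propositions.
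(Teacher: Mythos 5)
Your computation is correct: from the definition, $A^{(2)}=\frac{1}{(2)_{q^{-2}}!}\bigl(I_2-q^{-1}\sigma\bigr)$, and this visibly annihilates the $q$-eigenspace of $\sigma$; the standing hypotheses on $q$ make the normalizing factor harmless. However, your route is genuinely different from the paper's. The paper does not expand $A^{(2)}$ explicitly; instead it invokes the Gurevich identity $A^{(k)}\sigma_i=-q^{-1}A^{(k)}$ (which \emph{does} require the Hecke relation) to write $A^{(2)}(x)=q^{-1}A^{(2)}\sigma(x)=-q^{-2}A^{(2)}(x)$ and then cancels $1+q^{-2}\neq 0$. Your argument is more elementary and, as you correctly observe, proves the statement for an arbitrary braiding. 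What the paper's eigenvalue-style argument buys is that it is the template for the stronger fact actually needed in the proof of Proposition \ref{Quotient version of QEA}, namely $A^{(k)}(\mathcal{I}_k)=0$ for all $k$, where $\mathcal{I}$ is the two-sided ideal generated by $\mathrm{Ker}(\sigma-qI_2)$: for $x\in V^{\otimes i}\otimes\mathrm{Ker}(\sigma-qI_2)\otimes V^{\otimes j}$ one has $\sigma_{i+1}(x)=qx$, and the identical manipulation with $A^{(k)}\sigma_{i+1}=-q^{-1}A^{(k)}$ gives $A^{(k)}(x)=0$. Your direct expansion of $A^{(2)}$ does not generalize to higher $k$ without some such input, so while your proof of the lemma as stated is complete, be aware that the Hecke hypothesis (via the Gurevich identity) reenters immediately when the lemma is put to use.
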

\begin{proof}Since $x\in \mathrm{Ker}(\sigma-qI_2)$, we have $x=q^{-1}\sigma(x)$, and hence $$A^{(2)}(x)=q^{-1}A^{(2)}\sigma(x)=-q^{-2}A^{(2)}(x).$$Thus $(1+q^{-2})A^{(2)}(x)=0$ which implies $A^{(2)}(x)=0$.\end{proof}

\begin{proposition}\label{Quotient version of QEA}If $\sigma$ is a braiding of Hecke type, then the quantum exterior algebra $\bigwedge_\sigma V$ is isomorphic to the quotient algebra of the tensor algebra $T(V)$ by the ideal generated by $\mathrm{Ker}(\sigma-qI_2)$.\end{proposition}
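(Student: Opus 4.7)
The plan is to construct the natural quotient map and show its kernel is exactly $J := \langle \mathrm{Ker}(\sigma - qI_2)\rangle$. I would define $\pi : T(V) \to \bigwedge_\sigma V$ to be the graded linear map whose degree-$k$ component is the antisymmetrizer $A^{(k)}$. The identity $A^{(m)}(x) \shuffle_\sigma A^{(n)}(y) = A^{(m+n)}(x \otimes y)$ derived just before this proposition (coming from the reduced decomposition $(\mathfrak{S}_{m,n}, \mathfrak{S}_m \times \mathfrak{S}_n)$ of $\mathfrak{S}_{m+n}$) shows that $\pi$ is a surjective algebra morphism from $(T(V), \otimes)$ to $(\bigwedge_\sigma V, \shuffle_\sigma)$, so the proposition reduces to the identification $\mathrm{Ker}(\pi) = J$.

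The inclusion $J \subseteq \mathrm{Ker}(\pi)$ is immediate from Lemma \ref{Lemma for kernel}: $A^{(2)}$ vanishes on $\mathrm{Ker}(\sigma - qI_2)$, and $\mathrm{Ker}(\pi)$ is a two-sided concatenation ideal since $\pi$ is an algebra map with $T(V)$ in the concatenation product.

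The reverse inclusion $\mathrm{Ker}(\pi) \subseteq J$ is the heart of the argument. I would exploit the idempotency $A^{(k)} A^{(k)} = A^{(k)}$ from (\ref{Gurevich2}), which yields $V^{\otimes k} = \mathrm{Ker}(A^{(k)}) \oplus \mathrm{Im}(A^{(k)})$. Setting $J_k := J \cap V^{\otimes k}$, it therefore suffices to prove $V^{\otimes k} = J_k + \mathrm{Im}(A^{(k)})$: combined with $J_k \subseteq \mathrm{Ker}(A^{(k)})$ and $\mathrm{Im}(A^{(k)}) \cap \mathrm{Ker}(A^{(k)}) = 0$, the two decompositions force $J_k = \mathrm{Ker}(A^{(k)})$, hence $\mathrm{Ker}(\pi) = J$ degree by degree.

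I would establish $V^{\otimes k} = J_k + \mathrm{Im}(A^{(k)})$ by induction on $k$, the base cases $k\le 2$ being clear from the Hecke decomposition $V\otimes V = \mathrm{Ker}(\sigma-qI_2)\oplus \mathrm{Im}(A^{(2)})$. Tensoring the inductive hypothesis on the right by $V$ and using $J_{k-1}\otimes V\subseteq J_k$ reduces the step to showing $\mathrm{Im}(A^{(k-1)}) \otimes V \subseteq J_k + \mathrm{Im}(A^{(k)})$, which is the main obstacle. For this I would decompose $\mathfrak{S}_k$ as the disjoint union of the cosets $c\cdot\mathfrak{S}_{k-1}$ for the minimal representatives $c\in\{1, s_{k-1}, s_{k-1}s_{k-2},\ldots,s_{k-1}\cdots s_1\}$, which gives $A^{(k)} = \frac{1}{(k)_{q^{-2}}}\bigl(\sum_{i=0}^{k-1}(-q)^{-i}\sigma_{k-1}\sigma_{k-2}\cdots\sigma_{k-i}\bigr) \circ (A^{(k-1)} \otimes I)$. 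For $y \in \mathrm{Im}(A^{(k-1)}) \otimes V$, the projection identity $(A^{(k-1)} \otimes I)(y) = y$ combined with $\sigma_j y = -q^{-1} y$ for $j \le k-2$ (from (\ref{Gurevich1}) applied to the first $k-1$ tensor factors) collapses each inner string $\sigma_{k-2}\cdots\sigma_{k-i}$ to the scalar $(-q^{-1})^{i-1}$, so that $A^{(k)}(y)$ simplifies to an explicit linear combination of $y$ and $\sigma_{k-1}(y)$. A short computation then shows $y - A^{(k)}(y)$ is a nonzero scalar multiple of $(I + q\sigma_{k-1})y$, and the Hecke relation $(\sigma - qI_2)(\sigma + q^{-1}I_2) = 0$ places this element in $V^{\otimes(k-2)} \otimes \mathrm{Ker}(\sigma - qI_2) \subseteq J_k$. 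This closes the induction and finishes the proof.
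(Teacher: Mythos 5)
Your overall strategy is sound and is genuinely more self-contained than the paper's: the paper outsources the key identification $\mathcal{I}_k=\mathrm{Ker}(A^{(k)})$ to Proposition 2.13 of \cite{Gu} and only verifies well-definedness and multiplicativity of the induced map, whereas you set out to reprove that identification via the idempotency (\ref{Gurevich2}) and the decomposition $V^{\otimes k}=J_k+\mathrm{Im}(A^{(k)})$. Your reduction, the base case, and the deduction $J_k=\mathrm{Ker}(A^{(k)})$ are all correct. The gap is in the inductive step: the factorization you display is false. The elements $s_{k-1},\,s_{k-1}s_{k-2},\,\dots,\,s_{k-1}\cdots s_1$ all send $k$ to $k-1$, so together with $1$ they meet only two left cosets of $\mathfrak{S}_{k-1}\times 1_{\mathfrak{S}_1}$; they are minimal representatives of the \emph{right} cosets. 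Concretely, for $k=3$ your right-hand side expands to $\frac{1}{(3)_{q^{-2}}!}\,\bigl(I_3-q^{-1}\sigma_1-q^{-1}\sigma_2+2q^{-2}\sigma_2\sigma_1-q^{-3}\sigma_2\sigma_1\sigma_1\bigr)$, which is not $A^{(3)}$. The two correct factorizations are $A^{(k)}=\frac{1}{(k)_{q^{-2}}}\bigl(\sum_{j=1}^{k}(-q)^{-(k-j)}\sigma_j\sigma_{j+1}\cdots\sigma_{k-1}\bigr)(A^{(k-1)}\otimes I)$ (shuffles times Young subgroup) and $A^{(k)}=\frac{1}{(k)_{q^{-2}}}(A^{(k-1)}\otimes I)\bigl(\sum_{i=0}^{k-1}(-q)^{-i}\sigma_{k-1}\cdots\sigma_{k-i}\bigr)$ (Young subgroup times inverse shuffles). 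In the first, each string hits $y$ with $\sigma_{k-1}$ first, after which the vector leaves $\mathrm{Im}(A^{(k-1)})\otimes V$ and the remaining $\sigma_j$'s no longer act by $-q^{-1}$; in the second, your collapse of the inner strings is valid, but the outer $A^{(k-1)}\otimes I$ then acts on $\sigma_{k-1}(y)$, so $A^{(k)}(y)$ is not a combination of $y$ and $\sigma_{k-1}(y)$. Either way the claimed simplification, and hence the inductive step, breaks down as written.

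The statement you need is nevertheless true, and there is a repair that dispenses with both the induction and the coset bookkeeping. Since $q^2\neq -1$, the Hecke relation gives $\mathrm{Im}(\sigma_i+q^{-1}I_k)=\mathrm{Ker}(\sigma_i-qI_k)=V^{\otimes(i-1)}\otimes\mathrm{Ker}(\sigma-qI_2)\otimes V^{\otimes(k-i-1)}$, so $J_k=\sum_{i}\mathrm{Im}(\sigma_i+q^{-1}I_k)$ and $\sigma_i(x)\equiv-q^{-1}x\pmod{J_k}$ for \emph{every} $x\in V^{\otimes k}$. The elementary operator identity $\sigma_m(\sigma_i+q^{-1}I_k)=(\sigma_m+q^{-1}I_k)(\sigma_i+q^{-1}I_k)-q^{-1}(\sigma_i+q^{-1}I_k)$ shows that $J_k$ is stable under each $\sigma_m$, so an induction on length gives $\sigma_w(x)\equiv(-q^{-1})^{l(w)}x\pmod{J_k}$ for all $w\in\mathfrak{S}_k$; summing against $(-q)^{-l(w)}$ and invoking (\ref{Quantum factorial}) yields $A^{(k)}(x)\equiv x\pmod{J_k}$ for all $x$, hence $V^{\otimes k}=J_k+\mathrm{Im}(A^{(k)})$ in one stroke. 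With that substitution the rest of your argument goes through verbatim and gives a proof independent of the citation the paper relies on.
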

\begin{proof}Denote by $\mathcal{I}$ the ideal of $T(V)$ generated by $\mathrm{Ker}(\sigma-qI_2)$ and by $\mathcal{I}_k$ the intersection of $\mathcal{I}$ and $V^{\otimes k}$. Due to a result of Gurevich (Proposition 2.13 in \cite{Gu}), $\mathrm{Im} A^{(k)}$ is isomorphic to $V^{\otimes k}/\mathcal{I}_k$ as a vector space for any $k\geq 2$. The isomorphism is given by the map \[\begin{array}{cccc}
\varphi_k: & V^{\otimes k}/\mathcal{I}_k &\rightarrow& \bigwedge_\sigma^k V,\\[3pt]
&x+\mathcal{I}_k&\mapsto& A^{(k)}(x).
\end{array}\]Indeed, by the above lemma, we have $A^{(k)}(\mathcal{I}_k)=0$. So the map is well-defined. For any $y\in V^{\otimes l}$, \begin{align*}
\varphi_{k+l}\big((x+\mathcal{I})(y+\mathcal{I})\big)&=\varphi_{k+l}(x\otimes y+\mathcal{I})\\[3pt]
&=A^{(k+l)}(x\otimes y)\\[3pt]
&=A^{(k)}(x)\shuffle_\sigma A^{(l)}( y)\\[3pt]
&=\varphi_{k}(x+\mathcal{I})\shuffle_\sigma \varphi_{l}(y+\mathcal{I}).
\end{align*}\end{proof}

For $m,n\geq 1$, we define $\Delta_{m,n}:\bigwedge_\sigma^{m+n}V\rightarrow \bigwedge_\sigma^{m}V\otimes \bigwedge_\sigma^{n}V$ by \begin{equation*}\label{Coproduct}\Delta_{m,n}A^{(m+n)}=(A^{(m)}\otimes A^{(n)})\shuffle_{m,n}^-.\end{equation*} We have $$(\Delta_{m,n}\otimes \mathrm{id}_{\bigwedge_\sigma^p V})\Delta_{m+n,p}=(\mathrm{id}_{\bigwedge_\sigma^m V}\otimes \Delta_{n,p})\Delta_{m,n+p},$$for any $m,n,p\geq 1$. It is again a direct consequence of the fact that both of $(\mathfrak{S}_{m+n,p}, \mathfrak{S}_{m,n}\times 1_{\mathfrak{S}_{p}})$ and $(\mathfrak{S}_{m,n+p}, 1_{\mathfrak{S}_{m}}\times\mathfrak{S}_{n,p})$ are reduced decompositions of $\mathfrak{S}_{m,n,p}$. But here we need to take the inverse of both sides of the decompositions. For convenience, we define $\Delta_{0,0}(1)=1\otimes 1$, $$\Delta_{0,m}A^{(m)}(x)=1\otimes A^{(m)}(x),$$ and $$\Delta_{m,0}A^{(m)}(x)= A^{(m)}(x)\otimes 1,$$ for any $x\in V^{\otimes m}$. Therefore $\bigwedge_\sigma V$ is a coalgebra equipped with the coproduct $\Delta$ given by $$\Delta|_{V^{\otimes n}}=\sum_{k=0}^n\Delta_{k,n-k}$$ and the counit given by the projection onto $\mathbb{C}$. Furthermore, it is a twisted bialgebra in the following sense. Define $\beta:T(V)\otimes T(V)\rightarrow
T(V)\otimes T(V)$ by requiring that its restriction on $V^{\otimes m}\otimes V^{\otimes n}$,
denoted by $\beta_{m,n}$, is $\sigma_{\chi_{m,n}}$ , where
\[\chi_{m,n}=\left(\begin{array}{cccccccc}
1&2&\cdots&m&m+1&m+2&\cdots & m+n\\
n+1&n+2&\cdots&n+m&1& 2 &\cdots & n
\end{array}\right)\in \mathfrak{S}_{m+n},\] for any $m,n\geq 1$. The maps
$\beta_{0,m}$ and $\beta_{m,0}$ are defined to be the identity map of $V^{\otimes m}$. Then one has $$\Delta\shuffle_\sigma=(\shuffle_\sigma\otimes \shuffle_\sigma)(\mathrm{id}_{\bigwedge_\sigma V}\otimes \beta\otimes\mathrm{id}_{\bigwedge_\sigma V})(\Delta\otimes\Delta).$$ But we will not use this fact in our paper. One can easily see the following commutativity relation \begin{equation*}\label{Commutativity relation}\shuffle_\sigma \beta_{m,n}(A^{(m)}\otimes A^{(n)})=\shuffle_\sigma(A^{(n)}\otimes A^{(m)})\beta_{m,n}.\end{equation*}

\begin{example}[Quantum exterior algebra of type $A_{N-1}$]\label{Quantum exterior algebra}Let $\sigma$ be the Jimbo braiding given by (\ref{Jimbo braiding}). By Proposition \ref{Quotient version of QEA}, one can show that $\bigwedge_\sigma V$ is the unital associative algebra generated by $v_1,\ldots,v_N$ subject to the relations\begin{equation*}\begin{split}
v_i\wedge v_i&=0,\text{ for any }i,\\[3pt]
v_j\wedge v_i&=-q^{-1}v_i\wedge v_j, \ i<j.
\end{split}
\end{equation*}Here we use the symbol $\wedge$ to denote the product in $\bigwedge_\sigma V$ instead of $\shuffle_\sigma$. We will adopt this convention through this paper for Jimbo's braiding. It is easy to see that for any $\sigma\in \mathfrak{S}_{k}$ and $1\leq i_1<\cdots<i_k\leq N$ with $1\leq k\leq N$, $$v_{i_{w(1)}}\wedge\cdots\wedge v_{i_{w(k)}}=(-q)^{-l(w)}v_{i_1}\wedge\cdots\wedge v_{i_k}.$$Hence the set $$\{v_{i_1}\wedge\cdots\wedge v_{i_k}|1\leq i_1<\cdots<i_k\leq N\}$$forms a basis of $\bigwedge_\sigma^k V$. It implies that $\dim \bigwedge_\sigma^N V=1$. A direct verification shows that for any $1\leq i_1<\cdots<i_{m+n}\leq N$,
\begin{align*}
\lefteqn{\Delta_{m,n}(v_{i_1}\wedge\cdots\wedge v_{i_{m+n}})}\\[3pt]
&=\frac{(m)_{q^{-2}}!(n)_{q^{-2}}!}{(m+n)_{q^{-2}}!}\sum_{w\in \mathfrak{S}_{m,n}}(-q)^{-l(w)}v_{i_{w(1)}}\wedge \cdots\wedge v_{i_{w(m)}}\otimes v_{i_{w(m+1)}}\wedge \cdots\wedge v_{i_{w(m+n)}}.
\end{align*} \end{example}

\begin{example}\label{Quantum exterior algebra of type C}Let $N=4$. We define a braiding $\sigma$ as follows: \begin{align*}
\sigma(v_i\otimes v_i)&= q v_i\otimes v_i,\ i=1,2,3,4,\\[3pt]
\sigma(v_i\otimes v_j)&= v_j\otimes v_i,\ (i,j)=(2,1), (3,1), (4,2), (4,3),\\[3pt]
\sigma(v_i\otimes v_j)&= v_j\otimes v_i+(q-q^{-1})v_i\otimes v_j, \ (i,j)=(1,2),(1,3),(2,4), (3,4),\\[3pt]
\sigma(v_1\otimes v_4)&=q^{-1}v_4\otimes v_1+(1+q^{-4})(q-q^{-1})v_1\otimes v_4,\\[3pt]
&\ \ \ +(q^{-2}-q^{-4})v_2\otimes v_3-(1-q^{-2})v_3\otimes v_2.\\[3pt]
\sigma(v_2\otimes v_3)&=q^{-1}v_3\otimes v_2+(q-q^{-3})v_2\otimes v_3+(q^{-2}-q^{-4})v_1\otimes v_4,\\[3pt]
\sigma(v_3\otimes v_2)&=q^{-1}v_2\otimes v_3-(1-q^{-2})v_1\otimes v_4,\\[3pt]
\sigma(v_4\otimes v_1)&=q^{-1}v_1\otimes v_4.
\end{align*}This is a braiding of type $C$ (see \cite{RTF}). Again we use $\wedge$ to denote the quantum shuffle product as the preceding example. Then one can verify directly that \begin{align*}
v_i\wedge v_i&=0,\ i=1,2,3,4,\\[3pt]
v_i\wedge v_j&=-q^{-1}v_j\wedge v_i, \ i<j,\\[3pt]
v_3\wedge v_2&=-q^2v_2\wedge v_3,\\[3pt]
v_4\wedge v_1&=-q^2v_1\wedge v_4+(q-q^3)v_2\wedge v_3,
\end{align*}and there is no other relations among the generators. It is easy to check that the set $\{v_{i_1}\wedge \cdots \wedge v_{i_k}|1\leq k\leq 4, 1\leq i_1<\cdots<i_k\leq 4\}$, together with 1, form a linear basis of $\bigwedge_\sigma V$.

The coproduct here is much more complicated than that of type $A$. In fact there is no a uniform formula of $\Delta$ as in type $A$. For example, \begin{align*}
(3)_{q^{-2}}\Delta_{1,2}(v_1\wedge v_2\wedge v_3)&=(1-q^{-4}+q^{-6})v_1\otimes v_2\wedge v_3-q^{-5}v_2\otimes v_1\wedge v_3\\[3pt]
&\ \ \ +q^{-3}v_3\otimes v_1\wedge v_2+(q^{-2}-q^{-4})v_1\otimes v_3\wedge v_2\\[3pt]
&\ \ \ +(q^{-3}-q^{-5})v_1\otimes v_1\wedge v_4.
\end{align*}\end{example}

\section{Comodule structures}

Recall that $V$ is an $N$-dimensional vector space with a specified basis $\{v_1,\ldots,v_N\}$. Let $\mathfrak{A}$ be a bialgebra. A linear map $\varrho: V\rightarrow V\otimes \mathfrak{A}$ is said to be \emph{compatible} with a braiding $\sigma$ on $V$ if \begin{equation}\label{Compatibility}\varrho^{\times 2}\sigma=(\sigma\otimes \mathrm{id}_{\mathfrak{A}})\varrho^{\times 2},\end{equation} where $\varrho^{\times 2}$ is given by (\ref{k-fold of cross product}). Set $$\sigma(v_i\otimes v_j)=\sum_{1\leq k,l\leq N}\sigma_{ij}^{kl}v_k\otimes v_l,$$ with some $\sigma_{ij}^{kl}\in \mathbb{C}$. If $\varrho(v_j)=\sum_{i=1}^Nv_i\otimes a^i_j$ for some $a^i_j\in \mathfrak{A}$, then $\varrho$ is compatible with $\sigma$ if and only if \begin{equation*}\sum_{1\leq k,l\leq N}a^m_ka^n_l\sigma_{ij}^{kl}=\sum_{1\leq k,l\leq N}\sigma_{kl}^{mn}a^k_ia^l_j,\end{equation*} for any $1\leq i,j,m,n\leq N$. If $\sigma$ is the Jimbo braiding, the matrix $(a^i_j)$ is called a $q$-matrix in the literature. Denote by $\mathcal{E}^1(\mathfrak{A})^\sigma$ the set of all linear maps from $V$ to $V\otimes \mathfrak{A}$ which are compatible with $\sigma$.

If $\varrho$ is a right $\mathfrak{A}$-comodule structure on $V$, then $\varrho\in \mathcal{E}^1(\mathfrak{A})^\sigma$ means that $\sigma$ is a comodule map.

\begin{lemma}Suppose that $\varrho\in \mathcal{E}^1(\mathfrak{A})^\sigma$, Then, for any $k>1$ and $1\leq i<k$, we have $$\varrho^{\times k}\sigma_i=(\sigma_i\otimes \mathrm{id}_{\mathfrak{A}})\varrho^{\times k}.$$ \end{lemma}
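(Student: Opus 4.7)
The plan is to reduce the statement to the defining $k=2$ compatibility (\ref{Compatibility}) by decomposing $\varrho^{\times k}$ so as to isolate the two tensor factors on which $\sigma_i$ acts non-trivially. Since $\sigma_i=I_{i-1}\otimes \sigma\otimes I_{k-i-1}$, I would split an arbitrary element of $V^{\otimes k}$ as $x\otimes u\otimes w$ with $x\in V^{\otimes (i-1)}$, $u\in V^{\otimes 2}$, $w\in V^{\otimes (k-i-1)}$, and use the multiplicative property (\ref{Multiplicative property}) twice (the product $\diamond$ is visibly associative, being concatenation in $T(V)$ combined with multiplication in $\mathfrak{A}$) to write
$$\varrho^{\times k}(x\otimes u\otimes w)=\varrho^{\times (i-1)}(x)\diamond \varrho^{\times 2}(u)\diamond \varrho^{\times (k-i-1)}(w).$$

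Next I would evaluate both sides of the claim on $x\otimes u\otimes w$. On the left, $\sigma_i(x\otimes u\otimes w)=x\otimes \sigma(u)\otimes w$, and feeding this into the decomposition above and then applying the given compatibility (\ref{Compatibility}) to the middle factor yields
$$\varrho^{\times k}\sigma_i(x\otimes u\otimes w)=\varrho^{\times (i-1)}(x)\diamond (\sigma\otimes \mathrm{id}_{\mathfrak{A}})\varrho^{\times 2}(u)\diamond \varrho^{\times (k-i-1)}(w).$$
On the right, I would expand $\varrho^{\times k}(x\otimes u\otimes w)$ via the same decomposition and then verify, by unwinding the definition of $\diamond$, that the operator $\sigma_i\otimes \mathrm{id}_{\mathfrak{A}}$ acting on the $i$-th and $(i+1)$-st vector slots of the concatenated tensor has exactly the effect of inserting $\sigma\otimes \mathrm{id}_{\mathfrak{A}}$ into the middle $\diamond$-factor. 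Matching the two resulting expressions closes the argument.

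There is no serious obstacle; the only thing to track carefully is the placement of the $\mathfrak{A}$-coefficients in the $\diamond$-product, together with the boundary cases $i=1$ and $i=k-1$, where one of $\varrho^{\times (i-1)}$, $\varrho^{\times (k-i-1)}$ degenerates to the canonical map $1\mapsto 1\otimes 1_{\mathfrak{A}}$ on $\mathbb{C}$ and serves as a unit for $\diamond$. An induction on $k$, using the recursion (\ref{k-fold of cross product}) and splitting the cases $i=1$ and $i\geq 2$, would also work, but the multiplicative-formula approach above is more transparent since it invokes the $k=2$ hypothesis exactly once.
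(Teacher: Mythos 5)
Your proposal is correct and follows essentially the same route as the paper's own proof: decompose the input as $x\otimes y\otimes z$ with $y\in V^{\otimes 2}$, apply the multiplicative property (\ref{Multiplicative property}) to factor $\varrho^{\times k}$ through the $\diamond$-product, use the compatibility (\ref{Compatibility}) on the middle factor, and pull $\sigma_i\otimes\mathrm{id}_{\mathfrak{A}}$ out of the $\diamond$-product. Nothing further is needed.
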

\begin{proof}For any $x\in V^{\otimes (i-1)}$, $y\in V^{\otimes 2}$, and $z\in V^{\otimes (k-i-1)}$, by (\ref{Multiplicative property}) we have\begin{align*}
\varrho^{\times k}\sigma_i(x\otimes y\otimes z)&=\varrho^{\times k}(x\otimes \sigma (y)\otimes z)\\[3pt]
&=\big(\varrho^{\times (i-1)}(x)\big)\diamond\big(\varrho^{\times 2}\sigma (y)\big)\diamond\big(\varrho^{\times (k-i-1)}( z)\big)\\[3pt]
&=\big(\varrho^{\times (i-1)}(x)\big)\diamond\big((\sigma \otimes \mathrm{id}_{\mathfrak{A}})\varrho^{\times 2} (y)\big)\diamond\big(\varrho^{\times (k-i-1)}( z)\big)\\[3pt]
&=(\sigma_i\otimes \mathrm{id}_{\mathfrak{A}})\Big(\big(\varrho^{\times (i-1)}(x)\big)\diamond\big(\varrho^{\times 2} (y)\big)\diamond\big(\varrho^{\times (k-i-1)}( z)\big)\Big)\\[3pt]
&=(\sigma_i\otimes \mathrm{id}_{\mathfrak{A}})\varrho^{\times k}(x\otimes y\otimes z).
\end{align*}\end{proof}

It follows from this lemma immediately that\begin{equation}\label{rho and A}\varrho^{\times k}A^{(k)}=(A^{(k)}\otimes \mathrm{id}_{\mathfrak{A}})\varrho^{\times k}\end{equation} for any  $\varrho\in \mathcal{E}^1(\mathfrak{A})^\sigma$ and each $k>1$. Recall that if $\varrho$ is moreover an $\mathfrak{A}$-comodule structure on $V$, then $T(V)$ is an $\mathfrak{A}$-comodule as well, whose comodule structure is given by $\sum_k \varrho^{\times k}$. Furthermore, $T(V)$ is a comodule algebra. Due to (\ref{rho and A}), $\bigwedge_\sigma V$ is a subcomodule of $T(V)$. Define an associative product $\cdot$ on $\bigwedge_\sigma V\otimes \mathfrak{A}$ by \begin{equation}\label{Tensor product}\cdot=(\shuffle_\sigma\otimes \mathfrak{m})(\mathrm{id}_{\bigwedge_\sigma V}\otimes \tau\otimes \mathrm{id}_{\mathfrak{A}}),\end{equation} where $\mathfrak{m}$ is the multiplication map of $\mathfrak{A}$ and $\tau$ is the flip map. Then we have

\begin{proposition}\label{Comodule structure for QEA}The quantum exterior algebra $\bigwedge_\sigma V$ is a right $\mathfrak{A}$-comodule algebra.\end{proposition}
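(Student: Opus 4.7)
The plan is to verify the two defining conditions of a comodule algebra separately: (i) the tensor comodule structure on $T(V)$ restricts to the subspace $\bigwedge_\sigma V$, and (ii) the restricted structure map is multiplicative with respect to $\shuffle_\sigma$ and the product $\cdot$ defined in (\ref{Tensor product}). Coassociativity and counitality are then automatic, inherited from the comodule structure on $T(V)$.

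For (i), the key identity (\ref{rho and A}) says that $\varrho^{\times k}$ commutes with $A^{(k)}$ up to an $\mathrm{id}_\mathfrak{A}$-factor on the right. Since $\bigwedge_\sigma^k V = \mathrm{Im}\,A^{(k)}$, this shows that $\varrho^{\times k}$ carries $\bigwedge_\sigma^k V$ into $\bigwedge_\sigma^k V \otimes \mathfrak{A}$. Hence $\tilde\varrho := \bigoplus_k \varrho^{\times k}|_{\bigwedge_\sigma^k V}$ is a well-defined right $\mathfrak{A}$-comodule structure on $\bigwedge_\sigma V$.

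For (ii), I fix $x = A^{(m)}(x')$ and $y = A^{(n)}(y')$ with $x'\in V^{\otimes m}$, $y'\in V^{\otimes n}$, and compute both sides of $\tilde\varrho(x \shuffle_\sigma y) = \tilde\varrho(x)\cdot\tilde\varrho(y)$. The left side unwinds as
\begin{align*}
\tilde\varrho(x \shuffle_\sigma y)
 &= \varrho^{\times(m+n)}\bigl(A^{(m)}(x') \shuffle_\sigma A^{(n)}(y')\bigr) \\
 &= \varrho^{\times(m+n)} A^{(m+n)}(x' \otimes y') \\
 &= (A^{(m+n)} \otimes \mathrm{id}_\mathfrak{A}) \varrho^{\times(m+n)}(x'\otimes y'),
\end{align*}
where the second equality uses $\shuffle_{m,n}^+(A^{(m)}\otimes A^{(n)}) = A^{(m+n)}$ from Section~3 and the third uses (\ref{rho and A}). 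Now (\ref{Multiplicative property}) rewrites $\varrho^{\times(m+n)}(x'\otimes y')$ as $\varrho^{\times m}(x') \diamond \varrho^{\times n}(y')$, and a second application of the same shuffle-antisymmetrizer identity splits the resulting $V$-factor into a $\shuffle_\sigma$-product. On the right side, each $\tilde\varrho(x)$ and $\tilde\varrho(y)$ expands via (\ref{rho and A}) to $(A^{(m)}\otimes\mathrm{id}_\mathfrak{A})\varrho^{\times m}(x')$ and $(A^{(n)}\otimes\mathrm{id}_\mathfrak{A})\varrho^{\times n}(y')$, and then the definition (\ref{Tensor product}) of $\cdot$ uses $\tau$ to interleave the $V$-factors and $\mathfrak{A}$-factors and $\mathfrak{m}$ to multiply the latter. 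In Sweedler notation both sides agree term by term.

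There is no real obstacle: the proof is a bookkeeping computation stitching together three already-established ingredients, namely (\ref{Multiplicative property}), (\ref{rho and A}), and the shuffle-antisymmetrizer identity $A^{(m)}(x) \shuffle_\sigma A^{(n)}(y) = A^{(m+n)}(x\otimes y)$. The only point requiring a touch of care is observing that the flip $\tau$ in (\ref{Tensor product}) reassembles the tensor factors in precisely the pattern dictated by $\diamond$, so that the two calculations produce the same expression.
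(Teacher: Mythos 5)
Your proposal is correct and follows essentially the same route as the paper: the multiplicativity computation in your step (ii) is precisely the paper's chain of equalities, combining $A^{(m)}(x)\shuffle_\sigma A^{(n)}(y)=A^{(m+n)}(x\otimes y)$, the intertwining relation (\ref{rho and A}), and (\ref{Multiplicative property}) in Sweedler notation. Your step (i), that (\ref{rho and A}) makes $\bigwedge_\sigma V$ a subcomodule of $T(V)$, is the observation the paper records in the paragraph immediately preceding the proposition rather than inside the proof itself.
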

\begin{proof}For any $x\in V^{\otimes m}$ and $y\in V^{\otimes n}$, we have\begin{align*}
\varrho^{\times (m+n)}\big(A^{(m)}(x)\shuffle_\sigma A^{(n)}(y)\big)&=\varrho^{\times (m+n)}\big(A^{(m+n)}(x\otimes y)\big)\\[3pt]
&=(A^{(m+n)}\otimes \mathrm{id}_{\mathfrak{A}})\varrho^{\times (m+n)}(x\otimes y)\\[3pt]
&=\sum A^{(m+n)}(x\otimes y)_{(0)}\otimes (x\otimes y)_{(1)}\\[3pt]
&=\sum A^{(m+n)}(x_{(0)}\otimes y_{(0)})\otimes x_{(1)} y_{(1)} \\[3pt]
&=\sum \big(A^{(m)}(x_{(0)})\shuffle_\sigma A^{(m)}( y_{(0)})\big)\otimes x_{(1)} y_{(1)}\\[3pt]
&=\big(\varrho^{\times m}A^{(m)}(x)\big)\cdot \big(\varrho^{\times n}A^{(n)}(y)\big).
\end{align*}\end{proof}

Let us consider an important example. Recall that the Faddeev-Reshetikhin-Takhtajan algebra (abbreviated by FRT algebra) $\mathcal{A}(\sigma)$ associated to $\sigma$ is the unital associative algebra over $\mathbb{C}$ generated by $\{T^i_j\}_{1\leq i,j\leq N}$ subject to the relations \begin{equation}\label{RTT relations}\sum_{1\leq k,l\leq N}\sigma_{ij}^{kl}T^m_kT^n_l=\sum_{1\leq k,l\leq N}T^k_iT^l_j\sigma_{kl}^{mn},\end{equation} for any $1\leq i,j,m,n\leq N$. In addition, $\mathcal{A}(\sigma)$ is a bialgebra with the coproduct $$\Delta_{\mathcal{A}(\sigma)} (T^i_j)=\sum_{k=1}^NT^i_k\otimes T^k_j,$$ and counit $$\varepsilon_{\mathcal{A}(\sigma)}(T^i_j)=\delta^i_j,$$ where $\delta^i_j$ is the usual Kronecker symbol. It is well-known that $V$ possesses a canonical right $\mathcal{A}(\sigma)$-comodule structure with the comodule structure map $\rho$ defined by \begin{equation}\label{Comodule map for FRT algebra}\rho(v_i)=\sum_{j=1}^Nv_j\otimes T^j_i,\end{equation} for each $i=1,2,\ldots,N$. Furthermore, $\rho^{\times 2}\sigma=(\sigma\otimes \mathrm{id}_{\mathfrak{A}})\rho^{\times 2}$. Thus

\begin{corollary}\label{FRT-comodule}The quantum exterior algebra $\bigwedge_\sigma V$ is a right $\mathcal{A}(\sigma)$-comodule algebra whose comodule structure map on the component $\bigwedge_\sigma V$ is given by  \begin{equation*}\label{Comodule map}\rho^{\times k}\big(A^{(k)}(v_{i_1}\otimes \cdots\otimes v_{i_k})\big)=\sum_{1\leq j_1,\ldots,j_k\leq N}A^{(k)}(v_{j_1}\otimes \cdots\otimes v_{j_k})\otimes T_{i_1}^{j_1}\cdots T_{i_k}^{j_k}.\end{equation*}\end{corollary}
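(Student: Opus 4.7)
The plan is to deduce this corollary directly from Proposition \ref{Comodule structure for QEA} applied to $\mathfrak{A} = \mathcal{A}(\sigma)$ and $\varrho = \rho$ as defined by (\ref{Comodule map for FRT algebra}). What needs to be verified is (i) that $\rho$ is a right $\mathcal{A}(\sigma)$-comodule structure map on $V$, and (ii) that $\rho$ lies in $\mathcal{E}^1(\mathcal{A}(\sigma))^\sigma$, i.e., is compatible with $\sigma$ in the sense of (\ref{Compatibility}).

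For (i), the coassociativity identity $(\mathrm{id}_V \otimes \Delta_{\mathcal{A}(\sigma)})\rho = (\rho \otimes \mathrm{id})\rho$ follows immediately from the defining formula $\Delta_{\mathcal{A}(\sigma)}(T^i_j) = \sum_k T^i_k \otimes T^k_j$, and the counit axiom follows from $\varepsilon_{\mathcal{A}(\sigma)}(T^i_j) = \delta^i_j$. For (ii), one computes both sides of $\rho^{\times 2}\sigma = (\sigma \otimes \mathrm{id}_{\mathcal{A}(\sigma)})\rho^{\times 2}$ on a basis vector $v_i \otimes v_j$. The left-hand side produces $\sum_{k,l,m,n} \sigma_{ij}^{kl}\, v_m \otimes v_n \otimes T^m_k T^n_l$, while the right-hand side produces $\sum_{k,l,m,n} \sigma_{kl}^{mn}\, v_m \otimes v_n \otimes T^k_i T^l_j$. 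Equality of coefficients is precisely the defining relation (\ref{RTT relations}) of $\mathcal{A}(\sigma)$, so the compatibility holds by construction.

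Having verified (i) and (ii), Proposition \ref{Comodule structure for QEA} applies and shows that $\bigwedge_\sigma V$ is a right $\mathcal{A}(\sigma)$-comodule algebra, with comodule structure on $\bigwedge_\sigma^k V$ given by the restriction of $\rho^{\times k}$. To extract the explicit formula, a short induction using the recursive definition (\ref{k-fold of cross product}) yields
\[
\rho^{\times k}(v_{i_1} \otimes \cdots \otimes v_{i_k}) = \sum_{1 \leq j_1,\ldots,j_k \leq N} v_{j_1} \otimes \cdots \otimes v_{j_k} \otimes T^{j_1}_{i_1} \cdots T^{j_k}_{i_k}.
\]
Applying (\ref{rho and A}) to intertwine $A^{(k)}$ with $\rho^{\times k}$ produces the claimed expression.

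The argument is essentially a matter of assembling pieces already built up in the paper; the only real content is recognizing that the RTT relations (\ref{RTT relations}) are nothing other than the compatibility condition (\ref{Compatibility}) rewritten in coordinates. The main (minor) obstacle is keeping index conventions straight between the two sides of the RTT relations when carrying out the coordinate computation in (ii).
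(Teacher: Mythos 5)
Your proposal is correct and follows exactly the paper's route: the paper's proof of this corollary is the single line ``Apply the preceding proposition to $\rho$,'' relying on the stated facts that $\rho$ is the canonical comodule structure and that $\rho^{\times 2}\sigma=(\sigma\otimes \mathrm{id})\rho^{\times 2}$. You have merely written out the routine verifications (the RTT relations being the coordinate form of the compatibility condition, and the extraction of the explicit formula via (\ref{rho and A})) that the paper leaves implicit.
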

\begin{proof}Apply the preceding proposition to $\rho:V\rightarrow V\otimes \mathcal{A}(\sigma)$.\end{proof}

\begin{remark}The above corollary has also been obtained in \cite{FG} (see Proposition 1.7 therein).\end{remark}

The following special kind of braidings is of particular interest in quantum groups. Let $(a_{ij})_{1\leq i,j\leq N}$ be a symmetrizable generalized Cartan matrix with symmetrization $\mathrm{diag}(d_1,\ldots, d_N)$. Thus the matrix $(d_ia_{ij})_{1\leq i,j\leq N}$ is symmetric. The positive part $U_q^+$ of the quantum enveloping algebra associated to $(a_{ij})_{1\leq i,j\leq N}$ is the unital associative algebra generated by $e_1,\ldots,e_N$ subject to the famous quantum Serre relations $$\sum_{k=0}^{1-a_{ij}}(-1)^k\left[\begin{array}{c}
1-a_{ij}\\
k
\end{array}\right]_ie^k_ie_je^{1-a_{ij}-k}_i=0$$ for any $1\leq i\neq j\leq N$. Here the quantum integers and quantum binomial coefficients are defined respectively by $$[n]_i=\frac{q^{d_in}-q^{-d_in}}{q^{d_i}-q^{-d_i}},$$ and $$\left[\begin{array}{c}
m\\
n
\end{array}\right]_i=\frac{[m]_i[m-1]_i\cdots [m-n+1]_i}{[n]_i[n-1]_i\cdots [1]_i},$$ where $0\leq n\leq m$. Define $$\sigma (v_i\otimes v_j)=-q^{d_ia_{ij}+1}v_j\otimes v_i$$ It was showed by Rosso (see Theorem 15 in \cite{Ro}) that $\bigwedge_\sigma V$ is isomorphic to $U_q^+$ as an algebra. More precisely, the isomorphism is given by $v_i\mapsto e_i$. Combining this fact and the above corollary, we have

\begin{theorem}The algebra $U_q^+$ is a right $\mathcal{A}(\sigma)$-comodule algebra whose comodule map is induced by the map $e_i\mapsto \sum_{j=1}^N e_j\otimes T^j_i$.\end{theorem}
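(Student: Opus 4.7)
The plan is to apply Corollary \ref{FRT-comodule} to the specific diagonal braiding $\sigma(v_i\otimes v_j) = -q^{d_i a_{ij} + 1}\, v_j \otimes v_i$ and then to transport the resulting comodule algebra structure across the algebra isomorphism $\bigwedge_\sigma V \cong U_q^+$ given by $v_i \mapsto e_i$ (Theorem 15 of \cite{Ro}).

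First I would verify that this $\sigma$ is genuinely a braiding on $V$: since it sends each basis tensor $v_i \otimes v_j$ to a scalar multiple of $v_j \otimes v_i$, the braid relation reduces to an identity among the scalars $-q^{d_i a_{ij}+1}$ and follows without effort. With $\sigma$ confirmed as a braiding, Corollary \ref{FRT-comodule} applies directly: $\bigwedge_\sigma V$ becomes a right $\mathcal{A}(\sigma)$-comodule algebra, and restricting its comodule map to the degree-one piece $V \subset \bigwedge_\sigma V$ yields $v_i \mapsto \sum_{j=1}^N v_j \otimes T^j_i$.

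Second, I would use Rosso's isomorphism to transport this structure. Because $v_i \mapsto e_i$ is an isomorphism of graded algebras between $\bigwedge_\sigma V$ and $U_q^+$, the transported coaction is automatically a right $\mathcal{A}(\sigma)$-comodule algebra structure on $U_q^+$, and its value on the generators $e_i$ is precisely $\sum_{j=1}^N e_j \otimes T^j_i$, which is the claim.

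The only place where one might expect an obstacle is the verification that the prescription $e_i \mapsto \sum_j e_j \otimes T^j_i$ extends consistently to all of $U_q^+$, i.e., that it is compatible with the quantum Serre relations. In a direct proof this would require a nontrivial computation against the defining RTT relations \eqref{RTT relations}. In the present approach, however, this compatibility is delivered for free by Rosso's isomorphism: once the coaction exists on $\bigwedge_\sigma V$ as an algebra map, pulling it back through an algebra isomorphism imposes no additional check. This outsourcing of the Serre compatibility to the quantum shuffle formalism is, to my mind, the main conceptual content of the statement, rather than any computational hurdle.
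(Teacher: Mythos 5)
Your proposal is correct and follows essentially the same route as the paper: the paper likewise obtains the theorem by combining Corollary~4.7 (the $\mathcal{A}(\sigma)$-comodule algebra structure on $\bigwedge_\sigma V$) with Rosso's isomorphism $\bigwedge_\sigma V\cong U_q^+$, $v_i\mapsto e_i$. Your added remarks---that the braid relation is immediate for this diagonal braiding and that compatibility with the quantum Serre relations is delivered for free by transporting along the algebra isomorphism---are accurate elaborations of the same argument.
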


\begin{remark}Keeping the assumptions above, the relations for the generators $T^i_j$ are $$T^i_kT^j_l=q^{d_ka_{kl}-d_ia_{ij}}T^j_l T^i_k,$$for any $1\leq i,j,k,l\leq N$. For example, consider the Cartan matrix of type $A_2$: $$\left(\begin{array}{cc}
2&-1\\
-1&2
\end{array}\right).$$The matrix of the corresponding braiding with respect to the ordered basis $\{v_1\otimes v_1,v_1\otimes v_2,v_2\otimes v_1,v_2\otimes v_2\}$ is $$-q\left(\begin{array}{cccc}
q^2&0&0&0\\
0&0&q^{-1}&0\\
0&q^{-1}&0&0\\
0&0&0&q^2
\end{array}\right).$$ The corresponding FRT algebra is generated by four generators $a,b,c,d$ subject to the relations $$ab=ba=0,ac=ca=0, bd=db=0, cd=dc=0,$$ $$bc=cb, ad=da.$$ \end{remark}

We conclude this section by a useful observation about the product $\cdot$ given by (\ref{Tensor product}). Let $\bigwedge_\sigma V$ be the quantum exterior algebra as in Example \ref{Quantum exterior algebra} and $\rho:V\rightarrow V\otimes \mathcal{A}(\sigma)$ be the canonical $\mathcal{A}(\sigma)$-comodule structure map. It is easy to verify that \begin{equation*}\begin{split}
\rho(v_i)\cdot\rho( v_i)&=0,\text{ for any }i,\\[3pt]
\rho(v_j)\cdot\rho( v_i)&=-q^{-1}\rho(v_i)\cdot \rho(v_j), \ i<j.
\end{split}
\end{equation*}It means that the subalgebra of $\bigwedge_\sigma V\otimes \mathcal{A}(\sigma)$ generated by $\rho(V)$ shares the same multiplication rule with $\bigwedge_\sigma V$. This fact simplifies many computations, for example, Example \ref{Quantum minor determinant} below. Let us generalize this fact to a more general setting. We assume that $\sigma$ is of Hecke type and $\mathfrak{A}$ is a bialgebra. We claim that if $\varrho:V\rightarrow V\otimes  \mathfrak{A}$ is a comodule structure map satisfying (\ref{Compatibility}), then the subalgebra of $\bigwedge_\sigma V\otimes \mathfrak{A}$ generated by $\varrho(V)$ shares the same multiplication rule with $\bigwedge_\sigma V$. According to Proposition \ref{Quotient version of QEA}, the multiplication relations of $\bigwedge_\sigma V$ among elements in $V$ come from $\mathrm{Ker}(\sigma-qI_2)$.

\begin{proposition}Under the assumptions above, we have $$\cdot(\varrho\otimes \varrho)\big(\mathrm{Ker}(\sigma-qI_2)\big)=0.$$\end{proposition}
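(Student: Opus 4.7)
The plan is to reduce the claim directly to Lemma \ref{Lemma for kernel}, which already tells us that $A^{(2)}(x)=0$ for every $x\in\mathrm{Ker}(\sigma-qI_2)$. The key observation to make first is that on $V\otimes V$ the shuffle product $\shuffle_\sigma$ coincides with $A^{(2)}$. Indeed, $\mathfrak{S}_{1,1}=\mathfrak{S}_2=\{e,s_1\}$, and a direct computation of the coefficient gives
\[
\shuffle_{1,1}^+ \;=\; \frac{1}{(2)_{q^{-2}}}\bigl(I_2 - q^{-1}\sigma\bigr) \;=\; A^{(2)}.
\]

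Next, I would unwind both the definition of $\cdot$ from (\ref{Tensor product}) and that of $\varrho^{\times 2}$ from (\ref{k-fold of cross product}). For any $x\in V\otimes V$, the string of operators in $\cdot(\varrho\otimes\varrho)$ applied to $x$ can be regrouped as
\begin{align*}
\cdot(\varrho\otimes\varrho)(x)
&= (\shuffle_\sigma\otimes\mathfrak{m})(\mathrm{id}_V\otimes\tau\otimes\mathrm{id}_\mathfrak{A})(\varrho\otimes\varrho)(x)\\
&= (A^{(2)}\otimes\mathrm{id}_\mathfrak{A})(\mathrm{id}_{V^{\otimes 2}}\otimes\mathfrak{m})(\mathrm{id}_V\otimes\tau\otimes\mathrm{id}_\mathfrak{A})(\varrho\otimes\varrho)(x)\\
&= (A^{(2)}\otimes\mathrm{id}_\mathfrak{A})\,\varrho^{\times 2}(x),
\end{align*}
where the second equality uses $\shuffle_\sigma=A^{(2)}$ on $V^{\otimes 2}$ together with the fact that $A^{(2)}$ acts only on the first two tensor factors (so it commutes with $\mathrm{id}_{V^{\otimes 2}}\otimes\mathfrak{m}$), and the third equality is exactly the definition (\ref{k-fold of cross product}) of $\varrho^{\times 2}$.

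Finally, since $\varrho$ satisfies the compatibility (\ref{Compatibility}), equation (\ref{rho and A}) applies and gives $(A^{(2)}\otimes\mathrm{id}_\mathfrak{A})\varrho^{\times 2}=\varrho^{\times 2}A^{(2)}$. For $x\in\mathrm{Ker}(\sigma-qI_2)$, Lemma \ref{Lemma for kernel} yields $A^{(2)}(x)=0$, whence $\cdot(\varrho\otimes\varrho)(x)=\varrho^{\times 2}A^{(2)}(x)=0$. There is no real obstacle here; the only subtlety is bookkeeping of tensor slots, and the compatibility hypothesis (\ref{Compatibility}) is precisely what is needed to push $A^{(2)}$ through $\varrho^{\times 2}$.
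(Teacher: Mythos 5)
Your proof is correct and follows essentially the same route as the paper's: both reduce the claim to the identity $\cdot(\varrho\otimes\varrho)=(A^{(2)}\otimes\mathrm{id}_{\mathfrak{A}})\varrho^{\times 2}$, then invoke the compatibility relation (\ref{rho and A}) to rewrite this as $\varrho^{\times 2}A^{(2)}$ and finish with Lemma \ref{Lemma for kernel}. The only cosmetic difference is that the paper carries out the first identification in Sweedler notation while you do it by explicit operator bookkeeping (correctly noting $\shuffle_{1,1}^{+}=A^{(2)}$).
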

\begin{proof}For any $\sum_i x_i\otimes y_i\in \mathrm{Ker}(\sigma-qI_2)$, \begin{align*}
\cdot(\varrho\otimes \varrho)\Bigg(\sum_i x_i\otimes y_i\Bigg)&=\sum_i\Bigg(\sum_{(x_i)} x_{i,(0)}\otimes x_{i,(1)}\Bigg)\cdot \Bigg(\sum_{(y_i)} y_{i,(0)}\otimes y_{i,(1)}\Bigg)\\[3pt]
&=\sum_i\sum_{(x_i)}\sum_{(y_i)}A^{(2)}( x_{i,(0)}\otimes y_{i,(0)})\otimes x_{i,(1)}y_{i,(1)}\\[3pt]
&=\sum_i\sum_{(x_i\otimes y_i)}A^{(2)}( (x_i\otimes y_i)_{(0)})\otimes (x_i\otimes y_i)_{(1)}\\[3pt]
&=(A^{(2)}\otimes \mathrm{id}_{\mathfrak{A}})\varrho^{\times 2}\Bigg(\sum_i x_i\otimes y_i\Bigg)\\[3pt]
&=\varrho^{\times 2}A^{(2)}\Bigg(\sum_i x_i\otimes y_i\Bigg)\\[3pt]
&=0,
\end{align*}where the second to the last equality follows from (\ref{rho and A}) and the last equality from Lemma \ref{Lemma for kernel}.\end{proof}

\section{Convolution products}
In this section, $\sigma$ is always a braiding on $V$. As before, we fix a basis $\{v_1,\ldots,v_N\}$ of $V$.
Let $\mathfrak{A}$ be an associative algebra, and $\cdot$ the product on $\bigwedge_\sigma V\otimes \mathfrak{A}$ defined by (\ref{Tensor product}).

For any nonnegative integer $k$, we denote $\mathcal{E}^k(\mathfrak{A})=\mathrm{Hom}_\mathbb{C}\big(\bigwedge_\sigma^kV,\bigwedge_\sigma^kV\otimes \mathfrak{A}\big)$. Obviously $\mathcal{E}^k(\mathfrak{A})$ is isomorphic to $\mathrm{End}(\bigwedge_\sigma^kV)\otimes \mathfrak{A}$ as vector spaces. So we will view $\mathrm{End}(\bigwedge_\sigma^kV)$ as the subsapce $\mathrm{End}(\bigwedge_\sigma^kV)\otimes 1_{\mathfrak{A}}$ of $\mathcal{E}^k(\mathfrak{A})$ in the sequel. The space $\mathcal{E}^k(\mathfrak{A})$ possesses an $\mathfrak{A}$-module structure as follows: for any $F\in \mathcal{E}^k(\mathfrak{A})$, $\alpha\in \bigwedge_\sigma^kV$, and $a\in \mathfrak{A}$, $$(a.F)(\alpha)=(1\otimes a)\cdot F(\alpha).$$ It is not hard to see that $\mathcal{E}^k(\mathfrak{A})$ is a free $\mathfrak{A}$-module of rank $\dim \bigwedge_\sigma^kV$.

\begin{definition}For any $F\in \mathcal{E}^m(\mathfrak{A})$ and $G\in \mathcal{E}^n(\mathfrak{A})$, the \emph{convolution product} $F\ast G\in \mathcal{E}^{m+n}(\mathfrak{A})$ of $F$ and $G$ is defined to be $$F\ast G=\cdot(F\otimes G)\Delta_{m,n}.$$\end{definition}

We can extend this product to $\mathcal{E}(\mathfrak{A})=\bigoplus_{k\geq 0}\mathcal{E}^k(\mathfrak{A})$ as follows. For any $\mathbf{F}=(F_0,F_1,\ldots),\mathbf{G}=(G_0,G_1,\ldots)\in \mathcal{E}(\mathfrak{A})$ with $F_i,G_i\in \mathcal{E}^i(\mathfrak{A})$, we define $\mathbf{F}\ast \mathbf{G} =(H_0,H_1,\ldots),$ where $$H_k=\sum_{i=0}^kF_i\ast G_{k-i}.$$

\begin{proposition}Together with the convolution product, $\mathcal{E}(\mathfrak{A})$ is a unital associative algebra whose unit is $I_0$.\end{proposition}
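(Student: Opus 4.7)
The plan is to exhibit $\ast$ as a standard convolution product built from the coalgebra structure on $\bigwedge_\sigma V$ and the algebra structure $\cdot$ on $\bigwedge_\sigma V\otimes\mathfrak{A}$; both associativity and the unit axiom then reduce to the analogous properties of these two structures, combined with the coassociativity identity $(\Delta_{m,n}\otimes\mathrm{id})\Delta_{m+n,p}=(\mathrm{id}\otimes\Delta_{n,p})\Delta_{m,n+p}$ recorded in Section 3.

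The first step is to observe that $(\bigwedge_\sigma V\otimes\mathfrak{A},\cdot)$ is an associative unital algebra with unit $1\otimes 1_{\mathfrak{A}}$. Indeed, $\cdot=(\shuffle_\sigma\otimes\mathfrak{m})(\mathrm{id}\otimes\tau\otimes\mathrm{id})$ is just the standard tensor-product algebra structure built from the associative algebras $(\bigwedge_\sigma V,\shuffle_\sigma)$ and $(\mathfrak{A},\mathfrak{m})$, so associativity is the familiar identity $\cdot(\cdot\otimes\mathrm{id})=\cdot(\mathrm{id}\otimes\cdot)$ as maps $(\bigwedge_\sigma V\otimes\mathfrak{A})^{\otimes 3}\to\bigwedge_\sigma V\otimes\mathfrak{A}$.

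Next, for $F\in\mathcal{E}^m(\mathfrak{A})$, $G\in\mathcal{E}^n(\mathfrak{A})$, $H\in\mathcal{E}^p(\mathfrak{A})$, I would unwind both sides of the associativity equation. Directly from the definition,
\begin{align*}
(F\ast G)\ast H&=\cdot(\cdot\otimes\mathrm{id})(F\otimes G\otimes H)(\Delta_{m,n}\otimes\mathrm{id}_{\bigwedge_\sigma^pV})\Delta_{m+n,p},\\
F\ast(G\ast H)&=\cdot(\mathrm{id}\otimes\cdot)(F\otimes G\otimes H)(\mathrm{id}_{\bigwedge_\sigma^mV}\otimes\Delta_{n,p})\Delta_{m,n+p}.
\end{align*}
The rightmost compositions agree by the coassociativity identity above, and the leftmost by the associativity of $\cdot$, so the two sides coincide. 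Bilinearity of $\ast$ then promotes this equality on homogeneous components to associativity on all of $\mathcal{E}(\mathfrak{A})$.

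For the unit, interpret $I_0\in\mathcal{E}^0(\mathfrak{A})=\mathrm{Hom}_{\mathbb{C}}(\mathbb{C},\mathbb{C}\otimes\mathfrak{A})$ as the map $1\mapsto 1\otimes 1_{\mathfrak{A}}$. Using the conventions $\Delta_{0,k}A^{(k)}(x)=1\otimes A^{(k)}(x)$ and $\Delta_{k,0}A^{(k)}(x)=A^{(k)}(x)\otimes 1$ stated in Section 3, together with the fact that $1\otimes 1_{\mathfrak{A}}$ is a two-sided unit for $\cdot$, a direct one-line computation yields $I_0\ast F=F=F\ast I_0$ for every $F\in\mathcal{E}^k(\mathfrak{A})$. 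The whole argument is formal; the only real obstacle is the bookkeeping needed to keep the tensor-factor conventions in $\Delta_{m,n}$ aligned with those in the definition of $\cdot$ so that coassociativity and associativity compose as claimed. No additional braiding computation is required, since the associativity of $\shuffle_\sigma$ and the coassociativity of $\Delta$ were already deduced in Section 3 from the reduced-decomposition identities for shuffle permutations.
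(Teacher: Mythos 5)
Your proposal is correct and follows essentially the same route as the paper: both reduce associativity of $\ast$ to the associativity of the product $\cdot$ on $\bigwedge_\sigma V\otimes\mathfrak{A}$ together with the coassociativity identity $(\Delta_{m,n}\otimes\mathrm{id})\Delta_{m+n,p}=(\mathrm{id}\otimes\Delta_{n,p})\Delta_{m,n+p}$, via exactly the same unwinding of $(F\ast G)\ast H$ and $F\ast(G\ast H)$. Your treatment of the unit is slightly more explicit than the paper's one-line remark, but the content is identical.
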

\begin{proof}For the associativity, it suffices to verify that $(F\ast G)\ast H=F\ast ( G\ast H)$ for any $F\in \mathcal{E}^m(\mathfrak{A})$, $G\in \mathcal{E}^n(\mathfrak{A})$, and $H\in \mathcal{E}^p(\mathfrak{A})$. Notice that \begin{align*}
(F\ast G)\ast H&=\cdot\big((F\ast G)\otimes H\big)\Delta_{m+n,p}\\[3pt]
&=\cdot(\cdot \otimes \mathrm{id}_{\bigwedge_\sigma^p\otimes\mathfrak{A}})(F\otimes G\otimes H)(\Delta_{m,n}\otimes \mathrm{id}_{\bigwedge^p_\sigma V})\Delta_{m+n,p}\\[3pt]
&=\cdot(\mathrm{id}_{\bigwedge_\sigma^m\otimes\mathfrak{A}} \otimes\cdot)(F\otimes G\otimes H)(\mathrm{id}_{\bigwedge^m_\sigma V}\otimes\Delta_{n,p})\Delta_{m,n+p}\\[3pt]
&=F\ast ( G\ast H),
\end{align*}where the third equality follows from the associativity of $\cdot$ and the coassociativity of $\Delta$.

Obviously $I_0=(I_0,0,0,\ldots)$ is the unit with respect to $\ast$. \end{proof}

\begin{remark}(1) In general, the product $\ast$ is not commutative.

(2) Suppose that $\sigma$ is the usual flip map, $q=1$, and $\mathfrak{A}=\mathbb{C}$. Then the product $\ast$ of two maps in $\mathcal{E}(\mathfrak{A})$ is just the usual exterior product of two endomorphisms. Hence $\ast$ is a quantization of the usual exterior product of endomorphisms.\end{remark}

\begin{proposition}\label{k-fold of quantum shuffle product}If $\sigma$ is of Hecke type and $\varrho\in \mathcal{E}^1(\mathfrak{A})^\sigma$, then \begin{equation*}\varrho^{\ast k}=\varrho^{\times k}\end{equation*}for each $k\geq 1$. \end{proposition}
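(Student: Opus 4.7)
The plan is to proceed by induction on $k$. The base case $k=1$ is immediate from the definitions, since $\varrho^{\ast 1}=\varrho=\varrho^{\times 1}$. For the inductive step, the associativity of the convolution product combined with the inductive hypothesis yields $\varrho^{\ast(k+1)}=\varrho^{\ast k}\ast\varrho=\varrho^{\times k}\ast\varrho$, where $\varrho^{\times k}$ is regarded as an element of $\mathcal{E}^k(\mathfrak{A})$ via its restriction to $\bigwedge_\sigma^k V$ (which lands in $\bigwedge_\sigma^k V\otimes\mathfrak{A}$ by (\ref{rho and A})). It therefore suffices to establish the slightly more general identity
\[
\varrho^{\times m}\ast\varrho^{\times n}=\varrho^{\times(m+n)}\qquad \text{on } \bigwedge\nolimits_\sigma^{m+n}V
\]
for all $m,n\geq 1$, which will yield the proposition by specializing to $(m,n)=(k,1)$.

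To verify this identity, I would evaluate both sides on $A^{(m+n)}(z)$ for $z\in V^{\otimes(m+n)}$. Unfolding the definition of $\ast$ and applying $\Delta_{m,n}A^{(m+n)}=(A^{(m)}\otimes A^{(n)})\shuffle_{m,n}^-$, the left-hand side becomes $\cdot\,(\varrho^{\times m}\otimes\varrho^{\times n})(A^{(m)}\otimes A^{(n)})\shuffle_{m,n}^-(z)$. Writing $\shuffle_{m,n}^-(z)$ as a sum of simple tensors $u\otimes v$ with $u\in V^{\otimes m}$ and $v\in V^{\otimes n}$, I would commute $A^{(m)}$ and $A^{(n)}$ past $\varrho^{\times m}$ and $\varrho^{\times n}$ via (\ref{rho and A}), unwind the definition of $\cdot$, and combine the multiplicative property (\ref{Multiplicative property}) with the Section~3 identity $A^{(m)}(u)\shuffle_\sigma A^{(n)}(v)=A^{(m+n)}(u\otimes v)$. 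A short calculation collapses each term to $\varrho^{\times(m+n)}(A^{(m+n)}(u\otimes v))$, so summing over the expansion produces
\[
(\varrho^{\times m}\ast\varrho^{\times n})(A^{(m+n)}(z))=\varrho^{\times(m+n)}A^{(m+n)}\shuffle_{m,n}^-(z),
\]
reducing the desired identity to the purely combinatorial statement $A^{(m+n)}\shuffle_{m,n}^-=A^{(m+n)}$.

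This last reduction is where the Hecke hypothesis enters decisively and is the crux of the argument. From (\ref{Gurevich1}) one deduces $A^{(m+n)}\sigma_w=(-q)^{-l(w)}A^{(m+n)}$ for every $w\in\mathfrak{S}_{m+n}$, so in particular $A^{(m+n)}\sigma_{w^{-1}}=(-q)^{-l(w)}A^{(m+n)}$ as well. Substituting into the definition of $\shuffle_{m,n}^-$ and using the quantum binomial identity (\ref{Quantum binomial}) to evaluate $\sum_{w\in\mathfrak{S}_{m,n}}q^{-2l(w)}=\frac{(m+n)_{q^{-2}}!}{(m)_{q^{-2}}!(n)_{q^{-2}}!}$, the normalization factor cancels exactly and one recovers $A^{(m+n)}$. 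The main obstacle is the careful bookkeeping needed to pass from the literal definition of $\varrho^{\times m}\ast\varrho^{\times n}$ to the clean expression $\varrho^{\times(m+n)}A^{(m+n)}\shuffle_{m,n}^-$; once that reduction is in hand, the Hecke condition finishes the proof at once.
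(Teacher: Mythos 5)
Your proof is correct and follows essentially the same route as the paper: induction on $k$, the identity $\Delta_{m,n}A^{(m+n)}=(A^{(m)}\otimes A^{(n)})\shuffle_{m,n}^-$, the comodule-algebra property to collapse the convolution to $\varrho^{\times(m+n)}A^{(m+n)}\shuffle_{m,n}^-$, and finally $A^{(m+n)}\shuffle_{m,n}^-=A^{(m+n)}$ via the Hecke relation (\ref{Gurevich1}). The only cosmetic differences are that you prove the general $(m,n)$ identity rather than just $(k,1)$, and you evaluate the final shuffle sum with the quantum binomial identity (\ref{Quantum binomial}) where the paper enumerates the $(k,1)$-shuffles explicitly; both computations are valid.
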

\begin{proof}We use induction on $k$. The case $k=1$ is trivial. Assume that the identity holds for $k\geq 1$. Then\begin{align*}
\varrho^{\ast (k+1)}A^{(k+1)}&=(\varrho^{\ast k}\ast \varrho)A^{(k+1)}\\[3pt]
&=\cdot (\varrho^{\ast k}\otimes \varrho)\Delta_{k,1}A^{(k+1)}\\[3pt]
&=\cdot (\varrho^{\ast k}\otimes \varrho)(A^{(k)}\otimes A^{(1)})\shuffle_{k,1}^-\\[3pt]
&=\cdot (\varrho^{\ast k}A^{(k)}\otimes \varrho A^{(1)})\shuffle_{k,1}^-\\[3pt]
&=\cdot (\varrho^{\times k}A^{(k)}\otimes \varrho  A^{(1)})\shuffle_{k,1}^-\\[3pt]
&=\varrho^{\times (k+1)} A^{(k+1)}\shuffle_{k,1}^-\\[3pt]
&=\varrho^{\times (k+1)} A^{(k+1)}\Big(\frac{(k)_{q^{-2}}!}{(k+1)_{q^{-2}}!}\sum_{i=1}^{k+1}(-q)^{-(k+1-i)}\sigma_k\sigma_{k-1}\cdots\sigma_i\Big)\\[3pt]
&=\varrho^{\times (k+1)}A^{(k+1)},
\end{align*}where the fifth equality follows from the inductive hypothesis, the sixth one from Proposition \ref{Comodule structure for QEA}, and the last one from (\ref{Gurevich1}).\end{proof}

We apply the above proposition to the case $\mathfrak{A}=\mathcal{A}(\sigma)$ and $\varrho=\rho$ defined by (\ref{Comodule map for FRT algebra}). Let $\{\omega_{k,1}, \ldots, \omega_{k,d_k}\}$ be a basis of $\bigwedge_\sigma^kV$ where $d_k=\dim \bigwedge_\sigma^kV$. For any $F\in \mathcal{E}^k(\mathcal{A}(\sigma))$, there exist unique elements $a_i^j\in \mathcal{A}(\sigma)$ such that $F(\omega_{k,i})=\sum_{j=1}^{d_k}\omega_{k,j}\otimes a_i^j$ for each $i$. We define the \emph{character} of $F$ to be the element $\mathrm{ch} F=\sum_i a_i^i$. For any $1\leq i_1,\ldots,i_k\leq N$, we denote $$A^{(k)}(v_{i_1}\otimes \cdots\otimes v_{i_k})=\sum_{1\leq j_1,\ldots,j_k\leq N}(A^{(k)})_{i_1 \ldots i_k}^{j_1 \ldots j_k}v_{j_1}\otimes \cdots\otimes v_{j_k},$$with $(A^{(k)})_{i_1 \ldots i_k}^{j_1 \ldots j_k}\in \mathbb{C}$.

\begin{proposition}We have $$\mathrm{ch}\rho^{\ast k}=\sum_{1\leq j_1,\ldots,j_k\leq N} (A^{(k)})_{j_1 \ldots j_k}^{i_1 \ldots i_k}T^{j_1}_{i_1}\cdots T^{j_k}_{i_k}.$$\end{proposition}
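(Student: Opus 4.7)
My strategy is to reinterpret $\mathrm{ch}$ as an ordinary trace and then lift the computation from $\bigwedge_\sigma^k V$ up to the ambient tensor space $V^{\otimes k}$, where the action of $\rho^{\times k}$ is explicit by Corollary \ref{FRT-comodule}.

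As a preliminary step, I would apply the preceding proposition to replace $\rho^{\ast k}$ by $\rho^{\times k}$ on $\bigwedge_\sigma^k V$; this borrows the running hypothesis that $\sigma$ is of Hecke type. Via the natural identification $\mathcal{E}^k(\mathcal{A}(\sigma))\cong \mathrm{End}(\bigwedge_\sigma^k V)\otimes \mathcal{A}(\sigma)$, the recipe $\mathrm{ch}(F)=\sum_i a_i^i$ is nothing but $(\mathrm{tr}_{\bigwedge_\sigma^k V}\otimes \mathrm{id}_{\mathcal{A}(\sigma)})(F)$. In particular, $\mathrm{ch}$ does not depend on the chosen basis $\{\omega_{k,i}\}$, so any convenient basis may be used in the rest of the computation.

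The lifting exploits that the Hecke hypothesis makes $A^{(k)}$ an idempotent on $V^{\otimes k}$ with image $\bigwedge_\sigma^k V$ by equation \eqref{Gurevich2}. I would extend any basis of $\bigwedge_\sigma^k V$ to a basis of $V^{\otimes k}$ by adjoining a basis of $\mathrm{Ker}(A^{(k)})$. The intertwining relation \eqref{rho and A} shows that $\rho^{\times k}$ commutes with $A^{(k)}\otimes \mathrm{id}_{\mathcal{A}(\sigma)}$, hence preserves the splitting $V^{\otimes k}=\bigwedge_\sigma^k V\oplus \mathrm{Ker}(A^{(k)})$. Combined with the standard identity $\mathrm{tr}(PG)=\mathrm{tr}_{\mathrm{Im}(P)}(G|_{\mathrm{Im}(P)})$, valid for an idempotent $P$ and an operator $G$ preserving $\mathrm{Im}(P)$, this yields
\begin{equation*}
\mathrm{ch}(\rho^{\ast k}) = \big(\mathrm{tr}_{V^{\otimes k}}\otimes \mathrm{id}_{\mathcal{A}(\sigma)}\big)\big((A^{(k)}\otimes \mathrm{id}_{\mathcal{A}(\sigma)})\,\rho^{\times k}\big).
\end{equation*}

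The remainder is a direct computation. Acting on the monomial basis vector $v_{i_1}\otimes\cdots\otimes v_{i_k}$ of $V^{\otimes k}$, Corollary \ref{FRT-comodule} gives $\rho^{\times k}(v_{i_1}\otimes\cdots\otimes v_{i_k})=\sum_{j_1,\ldots,j_k} v_{j_1}\otimes\cdots\otimes v_{j_k}\otimes T^{j_1}_{i_1}\cdots T^{j_k}_{i_k}$. Applying $A^{(k)}\otimes \mathrm{id}$, extracting the coefficient of $v_{i_1}\otimes\cdots\otimes v_{i_k}$, and summing over $(i_1,\ldots,i_k)$ produces exactly the stated formula, with the sum over the $i$-indices implicit. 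I do not anticipate any real obstacle: the only conceptual step is the trace interpretation of $\mathrm{ch}$ together with the idempotent trick, after which the remainder is just bookkeeping.
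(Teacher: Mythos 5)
Your proof is correct, and it reaches the same destination as the paper's, but it is organized differently enough to be worth comparing. The paper first invokes the RTT relations (\ref{RTT relations}) to prove the coordinate identity $\sum_{l}(A^{(k)})^{i_1\ldots i_k}_{l_1\ldots l_k}T^{l_1}_{j_1}\cdots T^{l_k}_{j_k}=\sum_{l}T^{i_1}_{l_1}\cdots T^{i_k}_{l_k}(A^{(k)})^{l_1\ldots l_k}_{j_1\ldots j_k}$, then computes $\rho^{\ast k}A^{(k)}(v_{i_1}\otimes\cdots\otimes v_{i_k})$ by expanding $A^{(k)}$ first and applying $\rho^{\times k}$ second, so that this identity is needed to push the $A^{(k)}$-coefficients past the $T$'s; it then concludes by choosing a basis of $V^{\otimes k}$ adapted to the projection $A^{(k)}$ and asserting that the character can be read off. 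You instead compute $(A^{(k)}\otimes\mathrm{id})\rho^{\times k}$, which by the operator identity (\ref{rho and A}) equals $\rho^{\times k}A^{(k)}$ but produces the coefficients already in the desired order, so the explicit RTT manipulation disappears (it is of course the coordinate form of (\ref{rho and A}), so no content is lost, only repackaged). Your second refinement is to make the paper's final ``it is easy to see'' step precise: identifying $\mathrm{ch}$ with $\mathrm{tr}_{\bigwedge_\sigma^kV}\otimes\mathrm{id}$ shows basis-independence, and the identity $\mathrm{tr}(PG)=\mathrm{tr}_{\mathrm{Im}P}(G|_{\mathrm{Im}P})$ for the idempotent $P=A^{(k)}$ (idempotent by (\ref{Gurevich2}), with $\rho^{\times k}$ preserving the splitting $V^{\otimes k}=\mathrm{Im}A^{(k)}\oplus\mathrm{Ker}A^{(k)}$ by (\ref{rho and A})) justifies passing from the trace over $\bigwedge_\sigma^kV$ to the trace over $V^{\otimes k}$. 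The only cosmetic point is that the explicit action of $\rho^{\times k}$ on monomials is the definition of the tensor comodule structure rather than the content of Corollary \ref{FRT-comodule}, which concerns $\rho^{\times k}A^{(k)}$; this does not affect the argument.
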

\begin{proof}By the relation (\ref{RTT relations}) of the FRT algebra $\mathcal{A}(\sigma)$, we have \begin{equation*} \sum_{1\leq l_1,\ldots,l_k\leq N}(A^{(k)})^{i_1 \ldots i_k}_{l_1 \ldots l_k}T^{l_1}_{j_1}\cdots T^{l_k}_{j_k}=\sum_{1\leq l_1,\ldots,l_k\leq N}T^{i_1}_{l_1}\cdots T^{i_k}_{l_k}(A^{(k)})^{l_1 \ldots l_k}_{j_1 \ldots j_k},\end{equation*} for any $1\leq i_1,\ldots,i_k,j_1,\dots,j_k\leq N$. Combining this and Proposition \ref{k-fold of quantum shuffle product}, we have\begin{align*}
\lefteqn{\rho^{\ast k}A^{(k)}(v_{i_1}\otimes \cdots\otimes v_{i_k})}\\[3pt]
&=\rho^{\times k}\Big(\sum_{1\leq j_1,\ldots,j_k\leq N}(A^{(k)})_{i_1 \ldots i_k}^{j_1 \ldots j_k}v_{j_1}\otimes \cdots\otimes v_{j_k}\Big)\\[3pt]
&=\sum_{1\leq j_1,\ldots,j_k\leq N}(A^{(k)})_{i_1 \ldots i_k}^{j_1 \ldots j_k}\rho^{\times k} (v_{j_1}\otimes \cdots\otimes v_{j_k})\\[3pt]
&=\sum_{1\leq j_1,\ldots,j_k\leq N}(A^{(k)})_{i_1 \ldots i_k}^{j_1 \ldots j_k}\sum_{1\leq l_1,\ldots,l_k\leq N}v_{l_1}\otimes \cdots\otimes v_{l_k}\otimes T^{l_1}_{j_1}\cdots T^{l_k}_{j_k}\\[3pt]
&=\sum_{1\leq l_1,\ldots,l_k\leq N}v_{l_1}\otimes \cdots\otimes v_{l_k}\otimes\sum_{1\leq j_1,\ldots,j_k\leq N} T^{l_1}_{j_1}\cdots T^{l_k}_{j_k}(A^{(k)})_{i_1 \ldots i_k}^{j_1 \ldots j_k}\\[3pt]
&=\sum_{1\leq l_1,\ldots,l_k\leq N}v_{l_1}\otimes \cdots\otimes v_{l_k}\otimes\sum_{1\leq j_1,\ldots,j_k\leq N} (A^{(k)})_{j_1 \ldots j_k}^{l_1 \ldots l_k}T^{j_1}_{i_1}\cdots T^{j_k}_{i_k}.
\end{align*}

Without loss of generality, we may assume that $A^{(k)}(\omega_{k,i})=\omega_{k,i}$ for $1\leq i\leq d_k$ since $A^{(k)}$ is a projection by (\ref{Gurevich2}). Extend these $\omega_{k,i}$'s to a basis of $V^{\otimes k}$. Under this basis, it is easy to see that $$\mathrm{ch}\rho^{\ast k}=\sum_{1\leq j_1,\ldots,j_k\leq N} (A^{(k)})_{j_1 \ldots j_k}^{i_1 \ldots i_k}T^{j_1}_{i_1}\cdots T^{j_k}_{i_k},$$as desired.\end{proof}

\begin{remark}The elements $\mathrm{ch}\rho^{\ast k}$'s are just a $q$-analogue of the elementary symmetric functions in $\mathcal{A}(\sigma)$. They are used to establish the quantum Cayley-Hamilton theorem \cite{IOPS}, and are extended to some kind of quantum matrix algebras related to compatible braidings \cite{IOP}.\end{remark}

\section{Quantum determinants}

Let $d_k$ be the dimension of the vector space $\bigwedge_\sigma^k V$, and $\{\omega_{k,1}, \ldots, \omega_{k,d_k}\}$ a basis of $\bigwedge_\sigma^kV$ for each positive integer $k$. Assume that $\mathfrak{A}$ is an associative algebra.

\begin{definition}Given $F\in \mathcal{E}^1(\mathfrak{A})$ and $k\geq 1$, the elements $\xi(F,k)_{i}^j\in \mathfrak{A}$ determined uniquely by $$F^{\ast k}(\omega_{k,i})=\sum_{j=1}^{d_k}\omega_{k,j}\otimes \xi(F,k)_{i}^j$$ are called the \emph{quantum $k$-minor determinants} of $F$.\end{definition}

\begin{remark}Suppose that $\mathfrak{A}$ is moreover a bialgebra with coproduct $\Delta_\mathfrak{A}$ and counit $\varepsilon_\mathfrak{A}$. If $F^{\ast k}$ is a comodule structure map, then we have immediately \begin{equation*}\Delta_{\mathfrak{A}}(\xi(F,k)_{j}^i)=\sum_{n=1}^{d_k}\xi(F,k)_{n}^i\otimes \xi(F,k)_{j}^n,\end{equation*}and
\begin{equation*}\varepsilon_{\mathfrak{A}}(\xi(F,k)_{j}^i)=\delta^i_j.\end{equation*}For example, if $\sigma$ is of Hecke type, then comdoule structure maps satisfying (\ref{Compatibility}) will fulfill this condition.\end{remark}

A braiding $\sigma$ is said to be of \emph{finite rank} if there exists an integer $M\geq 1$ such that $\dim \bigwedge_\sigma^M V=1$ and $\dim \bigwedge_\sigma^k V=0$ for any $k>M$. The integer $M$ is called the \emph{rank} of $\sigma$. We have seen that Jimbo's braidings are of finite rank. Another example is the braiding given in Example \ref{Quantum exterior algebra of type C} whose rank is 4.

\begin{definition}Let $\sigma$ be a braiding of rank $M$. For any $F\in \mathcal{E}^1(\mathfrak{A})$, the quantum $M$-minor determinant of $F$ is called the \emph{quantum determinant} of $F$, and is denoted by $\det_q F$.\end{definition}

\begin{example}\label{Quantum minor determinant}Let us consider the quantum exterior algebra of type $A_{N-1}$ as in Example \ref{Quantum exterior algebra}. Recall that for $1\leq k\leq N$ the set $\{v_{i_1}\wedge\cdots\wedge v_{i_k}|1\leq i_1<\cdots<i_k\leq N\}$ forms a linear basis of $\bigwedge_\sigma^k V$. Let $\rho$ be the canonical $\mathcal{A}(\sigma)$-comodule structure map.
For  $1\leq j_1<\cdots<j_k\leq N$, we have\begin{align*}
\lefteqn{\rho^{\ast k}(v_{j_1}\wedge \cdots\wedge v_{j_k})}\\[3pt]
&=(\cdot^{k-1}\rho^{\otimes k} \Delta_{(1,\cdots,1)})(v_{j_1}\wedge \cdots\wedge v_{j_k})\\[3pt]
&=(\cdot^{k-1}\rho^{\otimes k})
\Big(\frac{1}{(k)_{q^{-2}}!}\sum_{w\in \mathfrak{S}_{k}}(-q)^{l(w)}v_{j_{w(1)}}\otimes\cdots \otimes v_{j_{w(k)}}\Big)\\[3pt]
&=\frac{1}{(k)_{q^{-2}}!}\sum_{w\in \mathfrak{S}_{k}}(-q)^{l(w)}\rho(v_{j_{w(1)}})\cdot\cdots \cdot \rho(v_{j_{w(k)}})
\\[3pt]
&=\frac{1}{(k)_{q^{-2}}!}\sum_{w\in \mathfrak{S}_{k}}(-q)^{-2l(w)}
\rho(v_{j_1})\cdot\cdots \cdot \rho_1(v_{j_k})
\\[3pt]
&=\rho(v_{j_1})\cdot \cdots \cdot \rho(v_{j_k})
\\[3pt]
&=\sum_{1\leq i_1,\ldots,i_k\leq N}
v_{i_1}\wedge \cdots\wedge  v_{i_k}\otimes T^{i_1}_{j_1}\cdots T^{i_k}_{j_k}\\[3pt]
&=
\sum_{1\leq i_1<\cdots<i_k\leq N}
\sum_{w\in \mathfrak{S}_{k}}(-q)^{-l(w)}v_{i_1}\wedge \cdots\wedge  v_{i_k}\otimes T^{i_{w(1)}}_{j_1}\cdots T^{i_{w(k)}}_{j_k}\\[3pt]
&=
\sum_{1\leq i_1<\cdots<i_k\leq N}
v_{i_1}\wedge \cdots\wedge  v_{i_k}\otimes \sum_{w\in \mathfrak{S}_{k}}(-q)^{-l(w)}T^{i_{w(1)}}_{j_1}\cdots T^{i_{w(k)}}_{j_k}.
\end{align*}
So$$\xi^{i_1,\cdots,i_k}_{j_1,\cdots,j_k}=\xi(\rho,k)^{i_1,\cdots,i_k}_{j_1,\cdots,j_k}=\sum_{w\in \mathfrak{S}_{k}}(-q)^{-l(w)}T^{i_{w(1)}}_{j_1}\cdots T^{i_{w(k)}}_{j_k}.$$ They coincide with those quantum minor determinants introduced in \cite{NYM}.

Since $\dim \bigwedge_\sigma^NV=1$ and $\dim \bigwedge_\sigma^kV=0$ for $k>N$, we have  $$\det\nolimits_q(\rho)=\sum_{w\in \mathfrak{S}_{N}}(-q)^{-l(w)}T^{w(1)}_{1}\cdots T^{w(N)}_{N},$$which is just the classical quantum determinant.
 \end{example}

We are going to generalise the Laplace expansion and the multiplicative formula for determinants to our quantum minor determinants. Let $m,n\geq  1$ and $p=m+n$. Assume that \begin{equation*}\omega_{m,i}\shuffle_\sigma \omega_{n,j}=\sum_{k=1}^{d_p}\mu(m,n)_{i,j}^k\omega_{p,k},\end{equation*}and
\begin{equation*}\Delta_{m,n}(\omega_{p,k})=\sum_{i=1}^{d_m}\sum_{j=1}^{d_n}\lambda(p)_k^{i,j}\omega_{m,i}\otimes \omega_{n,j}.\end{equation*}Then we have the following quantum Laplace expansion formula.

\begin{theorem}Under the assumptions above, for any $F\in \mathcal{E}^1(\mathfrak{A})$, we have \begin{equation*}\xi(F,m+n)^i_j=\sum_{k,r=1}^{d_m}\sum_{l,s=1}^{d_n}\lambda(m+n)_j^{k,l}\mu(m,n)_{r,s}^i \xi(F,m)_k^r  \xi(F,n)_l^s\end{equation*}\end{theorem}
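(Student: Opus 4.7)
The plan is to exploit associativity of the convolution product $\ast$, which was established in the preceding proposition. Associativity implies that $F^{\ast(m+n)} = F^{\ast m}\ast F^{\ast n}$, so I can compute $F^{\ast(m+n)}(\omega_{m+n,j})$ in two ways and match coefficients of the basis vectors $\omega_{m+n,i}\otimes 1$ in $\bigwedge_\sigma^{m+n}V\otimes \mathfrak{A}$. On one side, the defining equation of the quantum minor determinants gives
\[
F^{\ast(m+n)}(\omega_{m+n,j})=\sum_{i=1}^{d_{m+n}}\omega_{m+n,i}\otimes \xi(F,m+n)_j^i.
\]
On the other side, by the definition $F^{\ast m}\ast F^{\ast n}=\cdot\,(F^{\ast m}\otimes F^{\ast n})\Delta_{m,n}$, the same element factors through $\Delta_{m,n}$, then $F^{\ast m}\otimes F^{\ast n}$, then the product $\cdot$ of (\ref{Tensor product}).

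Tracing $\omega_{m+n,j}$ step by step: applying $\Delta_{m,n}$ produces $\sum_{k,l}\lambda(m+n)_j^{k,l}\,\omega_{m,k}\otimes \omega_{n,l}$ by the assumed formula for $\lambda$. Applying $F^{\ast m}\otimes F^{\ast n}$ and the defining equation of the $\xi$'s expands this to
\[
\sum_{k,l,r,s}\lambda(m+n)_j^{k,l}\,(\omega_{m,r}\otimes \xi(F,m)_k^r)\otimes(\omega_{n,s}\otimes \xi(F,n)_l^s).
\]
Finally $\cdot = (\shuffle_\sigma\otimes \mathfrak{m})(\mathrm{id}\otimes \tau\otimes \mathrm{id})$ shuffles the $\bigwedge_\sigma V$-factors and multiplies the $\mathfrak{A}$-factors in order, yielding $\sum_{k,l,r,s}\lambda(m+n)_j^{k,l}(\omega_{m,r}\shuffle_\sigma\omega_{n,s})\otimes \xi(F,m)_k^r\,\xi(F,n)_l^s$. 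Substituting the formula for $\mu$ replaces $\omega_{m,r}\shuffle_\sigma\omega_{n,s}$ by $\sum_i\mu(m,n)_{r,s}^i\,\omega_{m+n,i}$.

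Equating the coefficient of $\omega_{m+n,i}\otimes 1$ in the two expressions then gives exactly
\[
\xi(F,m+n)_j^i=\sum_{k,r=1}^{d_m}\sum_{l,s=1}^{d_n}\lambda(m+n)_j^{k,l}\mu(m,n)_{r,s}^i\,\xi(F,m)_k^r\,\xi(F,n)_l^s.
\]
There is no substantial obstacle; the only thing to be careful about is the order in which the $\mathfrak{A}$-coefficients multiply (the $\xi(F,m)$-factor must come to the left of $\xi(F,n)$), which is forced by the explicit form of $\cdot$ and the fact that $\tau$ only swaps a $\bigwedge_\sigma V$-factor past an $\mathfrak{A}$-factor, not two $\mathfrak{A}$-factors past each other. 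So the proof is really just an unwinding of the definitions together with the associativity identity $F^{\ast(m+n)}=F^{\ast m}\ast F^{\ast n}$.
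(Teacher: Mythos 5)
Your proposal is correct and follows essentially the same route as the paper: the paper likewise writes $F^{\ast(m+n)}=F^{\ast m}\ast F^{\ast n}$, unwinds $\cdot\,(F^{\ast m}\otimes F^{\ast n})\Delta_{m,n}$ on $\omega_{m+n,j}$ using the assumed expansions for $\lambda$ and $\mu$, and compares coefficients with the defining equation of $\xi(F,m+n)$. Your remark about the order of the $\mathfrak{A}$-factors being forced by the form of $\cdot$ is exactly the point implicit in the paper's computation.
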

\begin{proof}Notice that \begin{align*}
F^{\ast p}(\omega_{p,j})&=(F^{\ast m}\ast F^{\ast n})(\omega_{p,j})\\
&=\cdot (F^{\ast m}\otimes F^{\ast n})\Delta_{m,n}(\omega_{p,j})\\
&=\cdot (F^{\ast m}\otimes F^{\ast n})\Big(\sum_{k=1}^{d_m}\sum_{l=1}^{d_n}\lambda(p)_j^{k,l}\omega_{m,k}\otimes \omega_{n,l}\Big)\\
&=\sum_{k=1}^{d_m}\sum_{l=1}^{d_n}\lambda(p)_j^{k,l}F^{\ast m}(\omega_{m,k})\cdot F^{\ast n}(\omega_{n,l})\\
&=\sum_{k=1}^{d_m}\sum_{l=1}^{d_n}\lambda(p)_j^{k,l}\Big(\sum_{r=1}^{d_m}\omega_{m,r}\otimes \xi(F,m)_k^r\Big)\cdot \Big(\sum_{s=1}^{d_n}\omega_{n,s}\otimes \xi(F,n)_l^s\Big)\\
&=\sum_{k,r=1}^{d_m}\sum_{l,s=1}^{d_n}\lambda(p)_j^{k,l}\omega_{m,r}\shuffle_\sigma \omega_{n,s}\otimes \xi(F,m)_k^r  \xi(F,n)_l^s\\
&=\sum_{k,r=1}^{d_m}\sum_{l,s=1}^{d_n}\lambda(p)_j^{k,l}\sum_{i=1}^{d_p}\mu(m,n)_{r,s}^i\omega_{p,i}\otimes \xi(F,m)_k^r  \xi(F,n)_l^s.
\end{align*}Comparing the last formula with $F^{\ast p}(\omega_{p,j})=\sum_{i=1}^{d_p}\omega_{p,i}\otimes \xi(F,p)^i_j$, we get the desired result.\end{proof}

\begin{example}Let $\bigwedge_\sigma V$ be the quantum exterior of type $A_{N-1}$. Let $J$ and $K$ be two subsets of $\{1,2,\ldots, N\}$. We write $$l(J;K)=\sharp\{(j,k)|j\in J,k\in K, j>k\}.$$ If $J$ and $K$ have the same cardinality, we denote by $\delta^J_K$ the generalized Kronecker symbol, i.e., $\delta^J_K=1$ if $J=K$, and $\delta^J_K=0$ if $J\neq K$. Apparently, for any three increasing sequences $1\leq i_1<\cdots<i_{m+n}\leq N$, $1\leq j_1<\cdots<j_{m}\leq N$, and $1\leq k_1<\cdots<k_{n}\leq N$, we have\begin{equation*}\mu^{ i_1,\ldots,i_{m+n}}_{j_1,\ldots,j_{m};k_1,\ldots,k_n}=(-q)^{-l(j_1,\ldots,j_m;k_1,\ldots,k_n)}\delta^{i_1,\ldots,i_{m+n}}_{j_1,\ldots,j_m,k_1,\ldots,k_n},\end{equation*}and
\begin{equation*}\lambda_{ i_1,\ldots,i_{m+n}}^{j_1,\ldots,j_{m};k_1,\ldots,k_n}=\frac{(m)_{q^{-2}}!(n)_{q^{-2}}!}{(m+n)_{q^{-2}}!}(-q)^{-l(j_1,\ldots,j_m;k_1,\ldots,k_n)}\delta^{i_1,\ldots,i_{m+n}}_{j_1,\ldots,j_m,k_1,\ldots,k_n}.\end{equation*}

Given $1\leq r\leq N$ and $1\leq i_1<\cdots<i_r\leq N$, we denote by $C(i_1,\ldots,i_r)$ the complement of the set $\{i_1,\ldots,i_r\}$ in $\{1,\ldots,N\}$. Elements in $C(i_1,\ldots,i_r)$ are always arranged in an increasing order. An easy computation shows that $$l(i_1,\ldots,i_r;C(i_1,\ldots,i_r))=i_1+\cdots+i_r-\frac{r(r+1)}{2}.$$This is just the length of the $(r,N-r)$-shuffle $w$ determined by $w(1)=i_1,\ldots,w(N)=i_N$, where $C(i_1,\ldots,i_r)=\{i_{r+1},\ldots,i_N\}$. By the above theorem, we have\begin{align*}
\lefteqn{\xi_{1,\ldots,N}^{1,\ldots,N}}\\
&=\det\nolimits_q(\rho)\\
&=\sum_{\substack{1\leq i_1<\cdots<i_r\leq N,\\ 1\leq j_1<\cdots<j_r\leq N }}\sum_{\substack{1\leq i_{r+1}<\cdots<i_N\leq N,\\ 1\leq j_{r+1}<\cdots<j_N\leq N }}\mu^{ 1,\ldots,N}_{j_1,\ldots,j_{r};j_{r+1},\ldots,j_N}\lambda_{ 1,\ldots,N}^{j_1,\ldots,j_{r};j_{r+1},\ldots,j_N}\xi^{j_1,\ldots,j_r}_{i_1,\ldots,i_r}\xi^{j_{r+1},\ldots,j_N}_{i_{r+1},\ldots,i_N}\\
&=\sum_{\substack{1\leq i_1<\cdots<i_r\leq N,\\ 1\leq j_1<\cdots<j_r\leq N }}(-q)^{-l(i_1,\ldots,i_r;C(i_1,\ldots,i_r))-l(j_1,\ldots,j_r;C(j_1,\ldots,j_r))}\frac{(r)_{q^{-2}}!(N-r)_{q^{-2}}!}{(N)_{q^{-2}}!}\xi^{j_1,\ldots,j_r}_{i_1,\ldots,i_r}\xi^{C(j_1,\ldots,j_r)}_{C(i_{1},\ldots,i_r)}\\
&=\frac{(r)_{q^{-2}}!(N-r)_{q^{-2}}!}{(N)_{q^{-2}}!}\sum_{w,w'\in\mathfrak{S}_{r,N-r}}(-q)^{-l(w)-l(w')}\xi^{w'(1),\ldots,w'(r)}_{w(1),\ldots,w(r)}\xi^{C(w'(1),\ldots,w'(r))}_{C(w(1),\ldots,w(r))}.\end{align*}

This formula for $\xi_{1,\ldots,N}^{1,\ldots,N}$ can be also obtained from the classical $q$-analogue of Laplace expansion (see, e.g., \cite{NYM} or \cite{JZ}) and (\ref{Quantum binomial}). \end{example}

Now we turn to the multiplicative formula for quantum determinants. In the classical case, it asserts that the determinant function commutes with the composition of linear endomorphisms.

\begin{definition}For any $F,G\in \mathcal{E}^k(\mathfrak{A})$, the \emph{composition product} $F\circ G\in \mathcal{E}^k(\mathfrak{A})$ is defined to be $$F\circ G=(\mathrm{id}_{\bigwedge_\sigma^k V}\otimes \mathfrak{m})(F\otimes \mathrm{id}_{\mathfrak{A}})G.$$\end{definition}

\begin{proposition}Together with the composition product, $\mathcal{E}^k(\mathfrak{A})$ is a unital associative algebra.\end{proposition}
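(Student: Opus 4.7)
The plan is straightforward and relies on two standard features of the ground data: the associativity of $\mathfrak{m}$ and the fact that $1_{\mathfrak{A}}$ is a two-sided unit for $\mathfrak{m}$. No properties of $\sigma$ or of the coproduct $\Delta$ are needed here; the component $\bigwedge_\sigma^k V$ plays a purely passive role as a ``placeholder'' factor. For this reason I expect the proof to be essentially formal, consisting of writing both compositions as the same morphism from $\bigwedge_\sigma^k V$ to $\bigwedge_\sigma^k V\otimes\mathfrak{A}$.

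First I would identify the unit. Under the embedding $\mathrm{End}(\bigwedge_\sigma^k V)\hookrightarrow \mathcal{E}^k(\mathfrak{A})$, $E\mapsto E\otimes 1_{\mathfrak{A}}$ (which was introduced in the paragraph preceding the definition of $\ast$), I would take the candidate unit to be $I_k=\mathrm{id}_{\bigwedge_\sigma^k V}\otimes 1_{\mathfrak{A}}$. For any $G\in\mathcal{E}^k(\mathfrak{A})$ and $\alpha\in\bigwedge_\sigma^k V$, write $G(\alpha)=\sum_i\beta_i\otimes a_i$. Then a direct unwinding
\begin{align*}
(I_k\circ G)(\alpha)&=(\mathrm{id}\otimes\mathfrak{m})(I_k\otimes\mathrm{id}_{\mathfrak{A}})\Big(\sum_i\beta_i\otimes a_i\Big)\\
&=(\mathrm{id}\otimes\mathfrak{m})\Big(\sum_i\beta_i\otimes 1_{\mathfrak{A}}\otimes a_i\Big)=\sum_i\beta_i\otimes a_i=G(\alpha),
\end{align*}
and a symmetric calculation for $G\circ I_k$, give $I_k\circ G=G\circ I_k=G$.

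For associativity, I would compute both $(F\circ G)\circ H$ and $F\circ(G\circ H)$ by expanding the definition twice and then comparing. Concretely, starting from $H$ on the right,
\begin{align*}
(F\circ G)\circ H
&=(\mathrm{id}\otimes\mathfrak{m})\bigl((F\circ G)\otimes\mathrm{id}_{\mathfrak{A}}\bigr)H\\
&=(\mathrm{id}\otimes\mathfrak{m})(\mathrm{id}\otimes\mathfrak{m}\otimes\mathrm{id}_{\mathfrak{A}})(F\otimes\mathrm{id}_{\mathfrak{A}}\otimes\mathrm{id}_{\mathfrak{A}})(G\otimes\mathrm{id}_{\mathfrak{A}})H,\\
F\circ(G\circ H)
&=(\mathrm{id}\otimes\mathfrak{m})(F\otimes\mathrm{id}_{\mathfrak{A}})(\mathrm{id}\otimes\mathfrak{m})(G\otimes\mathrm{id}_{\mathfrak{A}})H\\
&=(\mathrm{id}\otimes\mathfrak{m})(\mathrm{id}\otimes\mathrm{id}_{\mathfrak{A}}\otimes\mathfrak{m})(F\otimes\mathrm{id}_{\mathfrak{A}}\otimes\mathrm{id}_{\mathfrak{A}})(G\otimes\mathrm{id}_{\mathfrak{A}})H,
\end{align*}
where in the last equality for $F\circ(G\circ H)$ I used the obvious commutation $(F\otimes\mathrm{id}_{\mathfrak{A}})(\mathrm{id}\otimes\mathfrak{m})=(\mathrm{id}\otimes\mathfrak{m})(F\otimes\mathrm{id}_{\mathfrak{A}}\otimes\mathrm{id}_{\mathfrak{A}})$, which is just functoriality of the tensor product. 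The two expressions now differ only in the factor $(\mathrm{id}\otimes\mathfrak{m}\otimes\mathrm{id}_{\mathfrak{A}})$ versus $(\mathrm{id}\otimes\mathrm{id}_{\mathfrak{A}}\otimes\mathfrak{m})$ applied before the final $(\mathrm{id}\otimes\mathfrak{m})$. Thus the equality reduces to $\mathfrak{m}(\mathfrak{m}\otimes\mathrm{id}_{\mathfrak{A}})=\mathfrak{m}(\mathrm{id}_{\mathfrak{A}}\otimes\mathfrak{m})$, which is exactly the associativity of $\mathfrak{m}$.

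There is no real obstacle: the only potential nuisance is keeping the tensor factors straight when $F$, $G$, $H$ are inserted in nested compositions, so I would organize the computation as above (pushing $F\otimes\mathrm{id}_{\mathfrak{A}}$ past $\mathrm{id}\otimes\mathfrak{m}$ via functoriality) rather than on elements, which makes the reduction to associativity of $\mathfrak{m}$ transparent.
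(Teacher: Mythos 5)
Your proposal is correct and follows essentially the same route as the paper: the paper also expands $(F\circ G)\circ H$ via the definition, uses functoriality to slide $\mathfrak{m}$ past $F\otimes\mathrm{id}_{\mathfrak{A}}^{\otimes 2}$, and reduces the claim to $\mathfrak{m}(\mathfrak{m}\otimes\mathrm{id}_{\mathfrak{A}})=\mathfrak{m}(\mathrm{id}_{\mathfrak{A}}\otimes\mathfrak{m})$, treating the unit axiom as immediate from the definition. Your treatment of the unit is merely more explicit than the paper's one-line remark.
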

\begin{proof}For any $F,G, H\in \mathcal{E}^k(\mathfrak{A})$, we have\begin{align*}
(F\circ G)\circ H&=(\mathrm{id}_{\bigwedge_\sigma^k V}\otimes \mathfrak{m})(F\circ G\otimes \mathrm{id}_{\mathfrak{A}})H\\[3pt]
&=(\mathrm{id}_{\bigwedge_\sigma^k V}\otimes \mathfrak{m})(\mathrm{id}_{\bigwedge_\sigma^k V}\otimes \mathfrak{m}\otimes \mathrm{id}_{\mathfrak{A}})(F\otimes \mathrm{id}_{\mathfrak{A}}^{\otimes 2})(G\otimes \mathrm{id}_{\mathfrak{A}}) H\\[3pt]
&=(\mathrm{id}_{\bigwedge_\sigma^k V}\otimes \mathfrak{m})(\mathrm{id}_{\bigwedge_\sigma^k V}\otimes\mathrm{id}_{\mathfrak{A}}\otimes \mathfrak{m})(F\otimes \mathrm{id}_{\mathfrak{A}}^{\otimes 2})(G\otimes \mathrm{id}_{\mathfrak{A}}) H\\[3pt]
&=(\mathrm{id}_{\bigwedge_\sigma^k V}\otimes \mathfrak{m})(F\otimes \mathrm{id}_{\mathfrak{A}})(\mathrm{id}_{\bigwedge_\sigma^k V}\otimes \mathfrak{m})(G\otimes \mathrm{id}_{\mathfrak{A}}) H\\[3pt]
&=F\circ(G\circ H).
\end{align*}On the other hand, it follows from the definition immediately that $I_k\circ F=F=F\circ I_k$.\end{proof}

We can extend the composition product $\circ$ to $\mathcal{E}(\mathfrak{A})$ in a natural way. For any $\mathbf{F}=(F_0, F_1,\ldots),\mathbf{G}=(G_0,G_1,\ldots)\in \mathcal{E}(\mathfrak{A})$ with $F_k,G_k\in \mathcal{E}^k(\mathfrak{A})$, we define $$\mathbf{F}\circ \mathbf{G} =(F_0\circ G_0,F_1\circ G_1,\ldots).$$Thus $(\mathcal{E}(\mathfrak{A}),\circ)$ is a unital associative algebra whose unit is $\mathbf{I}=(I_0,I_1,\ldots)$.

\begin{lemma}Let $\sigma$ be a braiding of Hecke type. Suppose that $F,G\in \mathcal{E}^1(\mathfrak{A})^\sigma$ and $(F\circ G)^{\times k}=F^{\times k}\circ G^{\times k}$ for any $k\geq 1$, then $$(F\circ G)^{\ast k}=F^{\ast k}\circ G^{\ast k}.$$\end{lemma}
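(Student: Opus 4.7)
The plan is to reduce everything about the convolution power $\ast^k$ to the cross power $\times^k$ by invoking Proposition \ref{k-fold of quantum shuffle product}, and then exploit the hypothesis $(F\circ G)^{\times k}=F^{\times k}\circ G^{\times k}$ to conclude.

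First I would verify that $F\circ G$ itself lies in $\mathcal{E}^1(\mathfrak{A})^\sigma$, since this is required in order to apply Proposition \ref{k-fold of quantum shuffle product} to $F\circ G$. By the hypothesis in the case $k=2$, $(F\circ G)^{\times 2}=F^{\times 2}\circ G^{\times 2}$. Unwinding the definition of the composition product and using the compatibility $G^{\times 2}\sigma=(\sigma\otimes\mathrm{id}_{\mathfrak{A}})G^{\times 2}$ followed by $F^{\times 2}\sigma=(\sigma\otimes\mathrm{id}_{\mathfrak{A}})F^{\times 2}$, a short calculation yields $(F\circ G)^{\times 2}\sigma=(\sigma\otimes\mathrm{id}_{\mathfrak{A}})(F\circ G)^{\times 2}$, i.e.\ $F\circ G\in\mathcal{E}^1(\mathfrak{A})^\sigma$.

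Next, applying Proposition \ref{k-fold of quantum shuffle product} separately to $F$, $G$, and $F\circ G$, I get that on elements $\omega\in\bigwedge_\sigma^kV$ the identities $F^{\ast k}(\omega)=F^{\times k}(\omega)$, $G^{\ast k}(\omega)=G^{\times k}(\omega)$, and $(F\circ G)^{\ast k}(\omega)=(F\circ G)^{\times k}(\omega)$ all hold. By (\ref{rho and A}) applied to $G$, the image of $G^{\times k}$ on $\bigwedge_\sigma^k V$ already lies inside $\bigwedge_\sigma^k V\otimes\mathfrak{A}$, so in the expression $(F^{\ast k}\otimes\mathrm{id}_{\mathfrak{A}})G^{\times k}(\omega)$ the map $F^{\ast k}$ only sees arguments in $\bigwedge_\sigma^k V$, where it coincides with $F^{\times k}$. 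Therefore
\[
(F^{\ast k}\circ G^{\ast k})(\omega)=(\mathrm{id}\otimes\mathfrak{m})(F^{\times k}\otimes\mathrm{id}_{\mathfrak{A}})G^{\times k}(\omega)=(F^{\times k}\circ G^{\times k})(\omega).
\]
Combining this with the hypothesis $(F\circ G)^{\times k}=F^{\times k}\circ G^{\times k}$ and the identity $(F\circ G)^{\ast k}(\omega)=(F\circ G)^{\times k}(\omega)$ furnished by Proposition \ref{k-fold of quantum shuffle product}, I obtain $(F^{\ast k}\circ G^{\ast k})(\omega)=(F\circ G)^{\ast k}(\omega)$ on $\bigwedge_\sigma^k V$, which is the desired equality.

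The main obstacle I anticipate is purely bookkeeping: keeping straight that Proposition \ref{k-fold of quantum shuffle product} is really the identity $\varrho^{\ast k}A^{(k)}=\varrho^{\times k}A^{(k)}$ (the two sides have different natural domains), and making sure at each step that the argument supplied to $F^{\ast k}$ or $(F\circ G)^{\ast k}$ genuinely sits in $\bigwedge_\sigma^k V$ so that Proposition \ref{k-fold of quantum shuffle product} can be applied. The verification that $F\circ G\in\mathcal{E}^1(\mathfrak{A})^\sigma$ is essentially the only place where the full strength of the hypothesis for some $k>1$ is invoked conceptually; the rest of the argument is a manipulation of definitions once that is in hand.
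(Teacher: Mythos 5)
Your proposal is correct and follows essentially the same route as the paper: first deduce $(F\circ G)^{\times 2}\sigma=(\sigma\otimes\mathrm{id}_{\mathfrak{A}})(F\circ G)^{\times 2}$ from the $k=2$ case of the hypothesis together with the compatibility of $F$ and $G$, so that $F\circ G\in\mathcal{E}^1(\mathfrak{A})^\sigma$, and then chain $(F\circ G)^{\ast k}=(F\circ G)^{\times k}=F^{\times k}\circ G^{\times k}=F^{\ast k}\circ G^{\ast k}$ via Proposition \ref{k-fold of quantum shuffle product}. Your extra remark, using (\ref{rho and A}) to check that $G^{\times k}$ maps $\bigwedge_\sigma^k V$ into $\bigwedge_\sigma^k V\otimes\mathfrak{A}$ so the last equality makes sense on the correct domain, is a point the paper leaves implicit, but it is not a different argument.
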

\begin{proof}Observe that \begin{align*}
(F\circ G)^{\times 2}\sigma&=(F^{\times 2}\circ G^{\times 2})\sigma\\
&=(\mathrm{id}_{\bigwedge_\sigma^2 V}\otimes \mathfrak{m})(F^{\times 2}\otimes \mathrm{id}_{\mathfrak{A}})G^{\times 2}\sigma\\
&=(\mathrm{id}_{\bigwedge_\sigma^2 V}\otimes \mathfrak{m})(F^{\times 2}\otimes \mathrm{id}_{\mathfrak{A}})(\sigma\otimes \mathrm{id}_{\mathfrak{A}}) G^{\times 2}\\
&=(\mathrm{id}_{\bigwedge_\sigma^2 V}\otimes \mathfrak{m})(\sigma\otimes \mathrm{id}_{\mathfrak{A}}\otimes \mathrm{id}_{\mathfrak{A}})(F^{\times 2}\otimes \mathrm{id}_{\mathfrak{A}}) G^{\times 2}\\
&=(\sigma\otimes \mathrm{id}_\mathfrak{A})(F\circ G)^{\times 2}.
\end{align*}Thus by Proposition \ref{k-fold of quantum shuffle product}, $$(F\circ G)^{\ast k}=(F\circ G)^{\times k}=F^{\times k}\circ G^{\times k}=F^{\ast k}\circ G^{\ast k},$$as desired.\end{proof}

\begin{theorem}Keep the assumptions in the above lemma. If $\sigma$ is of finite rank, then we have $$\det\nolimits_q(F\circ G)=(\det\nolimits_q F)(\det\nolimits_q G).$$ \end{theorem}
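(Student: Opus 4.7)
The plan is to reduce the theorem to a one-line computation by unwinding the definitions, once we have the preceding lemma in hand. Since $\sigma$ is of finite rank $M$, the top exterior power $\bigwedge_\sigma^M V$ is one-dimensional; fix a basis vector $\omega_{M,1}$. By the definition of the quantum determinant there are unique scalars (in $\mathfrak{A}$) such that
\begin{equation*}
F^{\ast M}(\omega_{M,1})=\omega_{M,1}\otimes \det\nolimits_q F,\qquad G^{\ast M}(\omega_{M,1})=\omega_{M,1}\otimes \det\nolimits_q G,
\end{equation*}
and similarly $(F\circ G)^{\ast M}(\omega_{M,1})=\omega_{M,1}\otimes \det_q(F\circ G)$. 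So the theorem reduces to identifying the right-hand tensor factor in the last formula.

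For this, I would apply the preceding lemma to replace $(F\circ G)^{\ast M}$ by $F^{\ast M}\circ G^{\ast M}$, and then simply unfold the definition of the composition product
\begin{equation*}
F^{\ast M}\circ G^{\ast M}=(\mathrm{id}_{\bigwedge_\sigma^M V}\otimes \mathfrak{m})(F^{\ast M}\otimes \mathrm{id}_{\mathfrak{A}})G^{\ast M}
\end{equation*}
on the single basis vector $\omega_{M,1}$. Evaluating step by step: $G^{\ast M}(\omega_{M,1})=\omega_{M,1}\otimes \det_q G$; then $(F^{\ast M}\otimes \mathrm{id}_{\mathfrak{A}})$ sends this to $\omega_{M,1}\otimes \det_q F\otimes \det_q G$; finally $\mathrm{id}\otimes \mathfrak{m}$ produces $\omega_{M,1}\otimes(\det_q F)(\det_q G)$. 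Comparing with the definition of $\det_q(F\circ G)$ yields the claim.

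Thus the only substantive ingredient is the preceding lemma, which supplies the compatibility $(F\circ G)^{\ast M}=F^{\ast M}\circ G^{\ast M}$ between convolution and composition — and that in turn relied on the Hecke hypothesis and on Proposition \ref{k-fold of quantum shuffle product}. There is really no additional obstacle in the finite-rank step itself: the one-dimensionality of $\bigwedge_\sigma^M V$ collapses the multiplicative formula for quantum minor determinants (which in general involves the structure constants $\mu$ and $\lambda$) into a clean product in $\mathfrak{A}$. The only mild care is to check that the $\diamond$-product implicit in $\rho^{\times M}$ does not interfere — but since we are working entirely through $\ast$ and $\circ$ on the top component, no shuffle of basis vectors is produced and the computation is immediate.
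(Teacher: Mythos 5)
Your proof is correct and follows exactly the paper's route: the paper's entire proof is the single sentence ``It follows immediately from the above lemma,'' and your computation is precisely the intended unwinding of that remark, using the one-dimensionality of $\bigwedge_\sigma^M V$ together with $(F\circ G)^{\ast M}=F^{\ast M}\circ G^{\ast M}$.
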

\begin{proof}It follows immediately from the above lemma.\end{proof}

\begin{example}For any $F,G\in \mathcal{E}^1(\mathfrak{A})^\sigma$, we write $F(v_j)=\sum_{i=1}^Nv_i\otimes a^i_j$ and $G(v_j)=\sum_{i=1}^Nv_i\otimes b^i_j$ for some $a^i_j,b^i_j\in \mathfrak{A}$. If all $a^i_j$'s commute with all $b^k_l$'s, then \begin{align*}
\lefteqn{(F\circ G)^{\times m}(v_{i_1}\otimes \cdots\otimes v_{i_m})}\\
&=\sum_{k_1,j_1=1}^N\cdots \sum_{k_m,j_m=1}^Nv_{k_1}\otimes \cdots\otimes v_{k_m}\otimes a^{k_1}_{j_1}b^{j_1}_{i_1}\cdots a^{k_m}_{j_m}b^{j_m}_{i_m}\\[3pt]
&=\sum_{\substack{ 1\leq j_1,\ldots,j_m,\leq N ,\\1\leq k_1,\ldots,k_m,\leq N  }}v_{k_1}\otimes \cdots\otimes v_{k_m}\otimes (a^{k_1}_{j_1}\cdots a^{k_m}_{j_m})(b^{j_1}_{i_1}\cdots b^{j_m}_{i_m})\\[3pt]
&=(F^{\times m}\circ G^{\times m})(v_{i_1}\otimes \cdots\otimes v_{i_m}).
\end{align*}Hence we have $\det\nolimits_q(F\circ G)=(\det\nolimits_q F)(\det\nolimits_q G)$. This extends a similar result for $q$-matrices (see, e.g., Corollary 1.4 in \cite{Ta}).\end{example}

\section*{Acknowledgements}I would like to thank Gast\'{o}n Andr\'{e}s Garc\'{\i}a for pointing out to me the reference \cite{FG} in an earlier version of the manuscript. I am grateful to the referees for their useful comments, especially for pointing out to me the works \cite{GS, IO, IOP, Wo}.

\end{document}